\newtheorem{theorem}{Theorem}[section]
\newtheorem{corollary}[theorem]{Corollary}
\newtheorem{definition}[theorem]{Definition}
\newtheorem{remark}[theorem]{Remark}
\newtheorem{example}[theorem]{Example}
\newtheorem{assumption}[theorem]{Assumption}
\newtheorem{problem}[theorem]{Problem}
\newcommand{\R}{\mathbb{R}}
\newcommand{\N}{\mathbb{N}}
\newcommand{\DD}{\mathcal{D}}
\newcommand{\LH}{L^2(\Omega;H_0^1(\DD;\R))}
\newcommand{\tLH}{L^{2;(M)}(\Gamma;H_0^1(\DD;\R))}
\newcommand{\wB}{B_{a_{\min}^{-1}}}
\newcommand{\wF}{F_{a_{\min}^{-1}}}
\newcommand{\twB}{\bar{B}^{(M)}_{a_{\min}^{-1}}}
\newcommand{\twF}{\bar{F}^{(M)}_{a_{\min}^{-1}}}
\newcommand{\twE}{\bar{\mathcal{E}}^{(M)}_{a_{\min}^{-1}}}
\newcommand{\tu}{\bar{u}^{(M)}}
\newcommand{\tv}{\bar{v}^{(M)}}
\newcommand{\tf}{\bar{f}^{(M)}}
\title{Deep learning methods for stochastic Galerkin approximations of elliptic random PDEs.}
\date{} 					
\author{{\hspace{1mm}Fabio Musco} \\
	Department of Mathematics\\
	University of Stuttgart\\
	\texttt{fabio.musco@ians.uni-stuttgart.de} \\
	\And
	{\hspace{1mm}Andrea Barth} \\
	Department of Mathematics\\
	University of Stuttgart\\
	\texttt{andrea.barth@ians.uni-stuttgart.de} \\
}
\begin{document}
\bibliographystyle{plain}
\setcitestyle{numbers}
\maketitle

\begin{abstract}
%

This work considers stochastic Galerkin approximations of linear elliptic partial differential equations (PDEs) with stochastic forcing terms and stochastic diffusion coefficients, that cannot be bounded uniformly away from zero and infinity.
A traditional numerical method for solving the resulting high-dimensional coupled system of PDEs is replaced by deep learning techniques. 
In order to achieve this, physics-informed neural networks (PINNs), which typically operate on the strong residual of the PDE and can therefore be applied in a wide range of settings, are considered. As a second approach, the Deep Ritz method, which is a neural network that minimizes the Ritz energy functional to find the weak solution, is employed. While the second approach only works in special cases, it overcomes the necessity of testing in variational problems while maintaining mathematical rigor and ensuring the existence of a unique solution. Furthermore, the residual is of a lower differentiation order, reducing the training cost considerably. The efficiency of the method is demonstrated on several model problems. 
\end{abstract}

\keywords{Stochastic Galerkin method \and Physics-informed neural network \and Deep Ritz method}

\section{Introduction}
In the context of dynamical systems, which are used to describe real-world phenomena, uncertainty or measurement errors in the data are modeled mathematically by stochastic random fields or processes. This results in the generation of a stochastic or random partial differential equation (PDE). In this case, the unknown is not an element of a function space; rather, it is a measure over a function space. Consequently, the uncertainty associated with the unknown must be quantified in a goal-oriented manner. In the absence of knowledge regarding the law governing the distribution, moments are approximated.  In recent decades, a number of techniques have been developed for quantifying this uncertainty. The aforementioned methods can be broadly classified into two categories: intrusive and non-intrusive. The latter category encompasses all sampling techniques, including stochastic collocation methods and their associated variants. In this instance, a sampling strategy based on deterministic solvers is employed. Intrusive methods are based on the (generalized) polynomial chaos (PC) expansion, which employs a spectral representation in the random space to modify the dynamical system. This results in a (potentially high-dimensional and coupled) system of equations that must be solved. In this instance, the solver must be modified to accommodate the high-dimensionality of the transformed problem. On initial examination, it would appear that non-intrusive methods offer a more advantageous approach. However, sampling methods are prone to slow convergence rates, resulting in the generation of a statistical quantity. In contrast, intrusive methods provide a controlled error for the calculation of various moments of the solution. However, the high-dimensionality of the problem represents the primary challenge when employing an intrusive method. 
For an overview and comparison of both approaches, we refer to \cite{Field2015, Tuminaro2011}

In order to illustrate the basic principles of uncertainty quantification in modeling subsurface flows, we will consider a linear elliptic partial differential equation. To account for uncertainties, the resulting parameters are modeled as random fields, which in turn results in a random solution to the system. 
This system is solved by the global stochastic Galerkin method, where a spectral expansion of the random fields with respect to the stochastic inputs on a global polynomial basis is employed. This requires that the solution is smooth with respect to the stochastic inputs (see e.g.\ \cite{Babuska2007, BECK2012, Cohen2010, Nobile2008}). The spectral expansions yield a deterministic representation of the involved random fields, that are truncated at a given maximal polynomial degree (total degree) and substituted into the governing equations, meaning the stochastic Galerkin method is an intrusive technique.

A variety of techniques may be employed in order to solve the resulting coupled system, according to classical approaches.
For example, a truncated PC expansion in the stochastic space can be combined with a finite element space approximation in the spatial domain ($p \times h$-version). 
Alternatively, finite element space approximations can be utilized in the stochastic and spatial domain ($k \times h$-version).
There are various results, showing that stochastic Galerkin approximations for the classical elliptic random PDE exhibit exponential convergence with respect to the maximal polynomial degree (see e.g.\ \cite{Babuska2004, Babuska2005, Cohen2010, Schwab2011}).
Problematic for practical applications is the fast increasing dimensionality of the coupled system of equations.
Due to the curse of dimensionality, the maximal dimension of the coupled system (growing exponentially in the number of random variables and the maximal polynomial degree of the truncated PC expansion) is usually bounded by a small number or decoupling strategies are employed for numerical examples in the literature, see e.g. \cite{Ernst2011, Karniadakis2005, Mugler2011, Mugler2013, Xiu2002, Xiu2009}. 
A common decoupling procedure is based on a specific double orthogonal basis resulting in an exponentially growing number of decoupled PDEs (\cite{Babuska2004, Babuska2005, Frauenfelder2005}). However, this approach is only applicable if the driving random fields are multilinear combinations of the random variables, which is not the case for e.g.\ log-normal diffusion coefficients.

%
%

In this paper, we substitute a conventional solution technique within the spatial domain with a deep learning methodology. By doing so we are able to solve the resulting high dimensional system of fully coupled equations on a single commercially available GPU. We mitigate the curse of dimensionality in the sense that the proposed method scales by far better than comparable traditional stochastic Galerkin methods.
A plethora of methodologies exist for solving partial differential equations with neural networks (see \cite{Blechschmidt2021} for an overview). 
Significant advancements have been made in this field thanks to the introduction of physics-informed neural networks (PINNs), initially proposed by Karniadakis and colleagues in \cite{Raissi2019}, which were inspired by the work of Lagaris~\cite{Lagaris1997, Lagaris1998}. For a general introduction to PINNs and their application to PDEs we refer to \cite{Cuomo2022}.
In the context of PINNs, which typically operate on the strong residual of PDEs, we formulate the stochastic Galerkin method on the strong form of the elliptic random PDE. We follow the approach set forth in the literature, as exemplified in~\cite{Ghanem1991, Xiu2010}, and employ a Galerkin projection in the stochastic space. 
This projection guarantees that the residual, as defined by the PDE, is orthogonal to the finite-dimensional space spanned by the polynomial basis functions of the truncated PC expansion. This results in a coupled system of deterministic PDEs, in which the PC coefficient functions of the solution are solved for in a suitable Hilbert space. Additionally, there are methods for solving weak formulations of PDEs using PINNs. For further details, we refer to~\cite{Kharazmi2019}. Nevertheless, these approaches are computationally demanding, as the residual and the neural network must be integrated against a number of test functions. To aleviate this issue we propose a second distinct approach based on minimizing a functional known as the Ritz energy functional. 
We formulate the problem of finding a Galerkin projection of the weak solution in the stochastic space by minimizing a certain weighted Ritz energy functional.
As neural networks are designed to minimize a Monte Carlo integration scheme of the loss function, also called risk, during training, the Ritz energy functional is a natural choice for the risk. 
This approach was first introduced in \cite{Weinan2018}, under the name Deep Ritz method, and overcomes the necessity of testing in variational problems while maintaining mathematical rigor and ensuring the existence of a unique weak solution to the system. 

Neural networks have been employed to solve high-dimensional stochastic PDEs. We refer to \cite{Nabian2019, Zhang2020, Zhang2019} for different approaches and architectures of the neural networks.


\subsection{Contribution of the paper}
Our primary contribution is the development of the \textit{S-GalerkinNet} and the \textit{S-RitzNet}.
The former is based on the strong form stochastic Galerkin method, whereas the latter is predicated upon the weighted weak form. In both cases, the spectral coefficients of the stochastic Galerkin approximation are approximated. In the case of the \textit{S-RitzNet}, we derive a training strategy that is based on a weighted Ritz energy functional,
which has the advantage of mathematically guaranteeing the existence of a unique weak solution (see  Theorem~\ref{Thm:RitzEnergyFunction}). 
In contrast to the \textit{S-GalerkinNet}, lower regularity assumptions are necessary on the solution (and therefore also on the random field coefficient) -- formulated in Assumption~\ref{Assumption:Requirements_ExUnique_WeakSol_REPDE}. In particular, it is not necessary to assume that the random coefficient has a square integrable weak derivative which is needed for the \textit{S-GalerkinNet} (Assumption~\ref{Assumption:additional_assm_strong_residual}). This makes the Ritz approach suitable for applications such as subsurface flow models with log-Gaussian random fields. Furthermore, no testing in training is required for the \textit{S-RitzNet}, and the weighted Ritz energy functional provides a natural loss function. However, in contrast to the \textit{S-GalerkinNet}, the training loss is not quantifiable, and a validation is employed instead. To the best of our knowledge this is the first time that the Deep Ritz method is used as a basis for a stochastic Galerkin approximation. Alleviating the main two issues of the PINN appoach: First, that one needs restrictive regularity assumptions even to formulate the residual and second, that there is no mathematical guarantee that the solution has sufficient regularity (essentially $H^2$-regularity).

In Section~\ref{Chapter:REPDE}, we derive the various notions of solutions and investigate their basic properties, with a particular focus on the weighted weak formulation. In Section~\ref{Chapter:SpectralExpansion}, we introduce the PC expansion as a basis for the stochastic Galerkin method. Theorem~\ref{Thm:RitzEnergyFunction} demonstrates that the Ritz energy functional possesses a unique minimum, which serves to solve the weighted weak form. This result is fundamental to the \textit{S-RitzNet}. We derive the computational forms of the strong and weighted weak approximations before introducing the neural network architectures, namely \textit{S-GalerkinNet} and \textit{S-RitzNet,} in Section~\ref{sec:DL}. In Section~\ref{sec:num}, we present numerical experiments that demonstrate the enhanced training efficiency of the \textit{S-RitzNet} in comparison to the \textit{S-GalerkinNet}. The initial example demonstrates the applicability of the methods for problems that necessitate a high polynomial degree (considering a one-dimensional problem with stochastic forcing). The subsequent example, a two-dimensional problem with a stochastic coefficient, illustrates the efficacy of the methods for high-dimensional systems of fully coupled equations. Finally, an equation with a log-normal coefficient is solved, which requires a high polynomial degree and a high stochastic dimension. In essence, we have developed a methodology that enables the solution of a fully coupled system of equations of dimension $120$ in just over $20$ minutes (on a single NVIDIA RTX 3070 GPU), a feat that has to our knowledge not been achieved previously in the literature. The \textit{S-RitzNet} represents a pioneering attempt to make a stochastic Galerkin approach applicable to a diverse range of real-world applications characterized by low regularity.

\section{Elliptic random partial differential equation} \label{Chapter:REPDE}


Let $(\Omega, \mathcal{A}, P)$ be a complete probability space and let $\mathcal{D} \subset \mathbb{R}^d$, $d \in \mathbb{N}$, be an open, bounded and connected domain with Lipschitz boundary $\partial \mathcal{D}$ and closure $\overline{\mathcal{D}}$. We consider the elliptic random partial differential equation (RPDE) with homogeneous Dirichlet boundary conditions, stated as follows:
\begin{problem}[Strong formulation of the elliptic RPDE] \label{Problem:StrongFormulation_REPDE}
Find the solution $u: \Omega \times \overline{\mathcal{D}} \to \mathbb{R}$, such that:
\begin{align*}
- \nabla \cdot (a(\omega,x) \, \nabla u(\omega,x) ) &= f(\omega, x) 
	&& \hspace*{-2.5cm} x \in \DD, \, \omega \in \Omega,	
\\
u(\omega, x) &= 0
	&& \hspace*{-2.5cm} x \in \partial \DD, \, \omega \in \Omega,
\end{align*}
where the arising differential operators are understood with respect to the spatial variable $x \in \mathcal{D}$.
\end{problem} 
The random field $a: \Omega \times \mathcal{D} \to \mathbb{R}$, modeling the permeability, is called \textit{stochastic diffusion coefficient} and the random field $f:\Omega \times \mathcal{D} \to \mathbb{R}$, representing a forcing or source, is called \textit{stochastic forcing term}.
We equip the domain $\mathcal{D}$ with the Borel $\sigma$-algebra $\mathcal{B}(\mathcal{D})$ and the Lebesgue-measure $\lambda^d$ and assume the random fields $a$ and $f$ to be $(\mathcal{A} \otimes \mathcal{B}(\mathcal{D})) - \mathcal{B}(\mathbb{R})$-measurable. 
For simplicity we omit the $\sigma$-algebras in measurability statements when the context is clear.
We recall the definitions of the involved function spaces. Let $p \in (0, \infty]$, and let
\begin{align*}
L^p(\DD; \R) := L^p((\DD , \mathcal{B}(\DD), \lambda^d); (\R , \mathcal{B}(\R))
	= \left\lbrace \phi : \DD \to \R \text{ measurable} \, : \, \lVert \phi \rVert_{L^p(\DD; \R)} < \infty \right\rbrace, 
\end{align*}
denote the Lebesgue space with
\begin{align*}
\lVert \phi \rVert_{L^p(\DD; \R)} = \left( \int_\DD \lvert \phi(x) \rvert^p \, d\lambda^d(x) \right)^{1/p},
\end{align*} 
for $p \in (0, \infty)$, and
\begin{align*}
\lVert \phi \rVert_{L^{\infty}(\DD; \R)} = \inf \{c \geq 0 \, : \, \lvert \phi(x) \rvert \leq c \text{ for } \lambda^d \text{-a.e.\ } x \in \DD\},
\end{align*}
for $p = \infty$. Note that $\lVert \cdot \rVert_{L^p(\DD;\R)}$ defines a norm for $p \geq 1$, and a quasi-norm for $p \in (0,1)$. 
Furthermore, let $k \in \N$ and let
\begin{align*}
H^k(\DD; \R) := H^k((\DD, \mathcal{B}(\DD), \lambda^d); (\R; \mathcal{B}(\R))
= \{ \phi \in L^2(\DD; \R) \, : \, D^\alpha \phi \in L^2(\DD; \R) \text{ for every } \alpha \in \N^d \text{ with } \lvert \alpha \rvert \leq k\},
\end{align*}
denote the $k$-th order Sobolev space, where $\alpha = (\alpha_1, \dots , \alpha_d) \in \N^d$ denotes a multiindex of order $\lvert \alpha \rvert = \alpha_1 + \dots + \alpha_d$, and 
\begin{align*}
D^\alpha \phi = \frac{\partial^{\alpha_1}}{\partial x_1^{\alpha_1}} \dots \frac{\partial^{\alpha_d}}{\partial x_d^{\alpha_d}} \phi
\end{align*}
denotes the $\alpha$-th weak partial derivative of $\phi$. The defined Sobolev space, equipped with the norm
\begin{align*}
\lVert \phi \rVert_{H^k(\DD; \R) } = 
\bigg( \sum_{\lvert \alpha \rvert \leq k} \int_\DD \lvert D^\alpha \phi \rvert^2 \, d\lambda^d(x) \bigg)^{1/2},
\end{align*}
and corresponding inner product, is a Hilbert space. In the following, we denote the weak derivatives by the usual derivative notation.
We further introduce the first-order Sobolev space of functions that vanish at the boundary in the sense of traces via
\begin{align*}
H_0^1(\mathcal{D}; \R) := H_0^1((\mathcal{D}, \mathcal{B}(\mathcal{D}), \lambda^d) ; (\R, \mathcal{B}(\R))
	= \left\lbrace \phi \in H^1(\DD; \R) \, : \,  \phi\big|_{\partial \DD} = 0 \text{ in the sense of traces}\right\rbrace,
\end{align*}
equipped with the (semi-) norm
\begin{align*}
\lVert \phi \rVert_{H_0^1(\DD; \R)} = \left( \int_\DD \lVert \nabla \phi(x) \rVert_2^2 \, d\lambda^d(x) \right)^{1/2},
\end{align*}
where $\lVert \cdot \rVert_2$ denotes the Euclidean norm on $\mathbb{R}^d$. Note that this defines a norm on $H_0^1(\DD; \R)$ due to the Poincaré inequality. The space $H_0^1(\DD; \R)$, equipped with the inner product
\begin{align*}
\langle \phi, \psi \rangle_{H_0^1(\DD; \R)} = \int_\DD \nabla \phi(x) \cdot \nabla \psi(x) \, d\lambda^d(x),
\end{align*}
is a separable Hilbert space. 
We denote by $H^{-1}(\mathcal{D}; \R)$ the dual space of $H_0^1(\mathcal{D}; \R)$, and work on the Gelfand triplet 
\begin{align*}
H_0^1(\mathcal{D}; \R) \subset L^2(\mathcal{D}; \R) \cong \big(L^2(\mathcal{D} ; \R)\big)^* \subset H^{-1}(\mathcal{D}; \R),
\end{align*}
by identifying $L^2(\DD;\R)$ with its dual space $(L^2(\mathcal{D} ; \R))^*$. For $\phi \in H_0^1(\DD; \R), \psi \in H^{-1}(\DD; \R)$, consider the dual pairing
\begin{align*}
{}_{H^{-1}(\mathcal{D}; \R)} \langle \psi, \phi \rangle_{H_0^1(\mathcal{D};\R)} = \int_\DD \phi(x) \psi(x) \, d\lambda^d(x).
\end{align*}
Furthermore, we extend the notion of real-valued random variables to those taking values in function spaces.
Let $(E, \lVert \cdot \rVert_E)$ be a Banach space and let $\mathcal{B}(E)$ denote the Borel $\sigma$-algebra on $E$, that is the smallest $\sigma$-algebra generated by the $\lVert \cdot \rVert_E$-open sets. 
We call a mapping $X: \Omega \to E$ \textit{strongly measurable random field}, if it is $\mathcal{A}-\mathcal{B}(E)$-measurable and has a separable range, i.e. $X(\Omega) \subset E$ is $\lVert \cdot \rVert_E$-separable. 
Let $p \in [1, \infty]$, and let
\begin{align*}
L^p(\Omega; E) := L^p((\Omega, \mathcal{A}, P); (E, \mathcal{B}(E)))
	= \left\lbrace \phi: \Omega \to E \text{ strongly measurable} \, : \, \lVert \phi \rVert_{L^p(\Omega; E)} < \infty \right\rbrace,
\end{align*}
denote the Lebesgue-Bochner space with the norm
\begin{align*}
\lVert \phi \rVert_{L^p(\Omega; E)} 
	&=
\left( \int_\Omega \lVert \phi(\omega) \rVert_E^p \, dP(\omega) \right)^{1/p}, \text{ for } p \in [1, \infty), 
	\\
\lVert \phi \rVert_{L^{\infty}(\Omega; E)} 
	&=
\inf \{c \geq 0 \, : \, \lVert \phi(\omega) \rVert_E \leq c \text{ for } P-\text{a.e. } \omega \in \Omega \}. 
\end{align*}
If $E$ is a separable Hilbert space, then the Lebesgue-Bochner space $L^2(\Omega; E)$ is a separable Hilbert space that is isomorphic to the tensor product Hilbert space $L^2(\Omega; E) \cong L^2(\Omega; \R) \otimes E$ (see \citep[Remark 2.19]{Schwab2011}).
We refer to \citep[App. E]{Cohn2013} for the derivation and more details on the Lebesgue--Bochner spaces and integrals. 

For many applications, as well as benchmark settings, one is interested in diffusion coefficients that cannot be uniformly bounded away from zero and infinity by deterministic constants. This is for example the case for log-normal random fields.
In this case, we cannot directly apply the classical theory of Lax-Milgram (see e.g.\ \citep[Ch. 6.2]{Evans2022}) to proof the existence of a unique weak solution to Problem \ref{Problem:StrongFormulation_REPDE}. 
Instead we follow \citep{Mugler2013}, and earlier works \citep{Charrier2013, Gittelson2010, Mugler2011}, and only assume that the diffusion coefficient can be bounded by real-valued random variables.
\begin{assumption} \label{Assumption:Requirements_ExUnique_WeakSol_REPDE}
Let the stochastic diffusion coefficient $a: \Omega \times \DD \to \R$ be in $L^2(\Omega; L^{\infty}(\DD; \R))$ and let $a_{\min}, \, a_{\max}: \Omega \to \R$ be $\sigma(a)-\mathcal{B}(\R)$-measurable random variables on $(\Omega, \mathcal{A}, P)$, with $a_{\min}^{-1} \in L^p(\Omega; \R)$ for every $p \in (0,\infty)$, such that
\begin{align*}
0 < a_{\min}(\omega) \leq a(\omega, x) \leq a_{\max}(\omega) < \infty
\end{align*}
$P$-almost surely and for $\lambda^d$-almost every $x \in \DD$. Furthermore let the stochastic forcing term $f: \Omega \times \DD \to \R$ be an element of $L^2(\Omega; H^{-1}(\DD; \R))$.
\end{assumption}
Under Assumption \ref{Assumption:Requirements_ExUnique_WeakSol_REPDE}, it immediately follows that every realization of the stochastic diffusion coefficient is uniformly bounded away from zero and infinity. 
Hence, under standard arguments, the pathwise bilinear form $\widehat{B}: H_0^1(\DD; \R) \times H_0^1(\DD; \R) \to \R$, defined by
\begin{align*}
\widehat{B}(u,v; \omega) := \int_\DD a(\omega, x) \nabla u(x) \cdot \nabla v(x) \, d\lambda^d(x),
\end{align*}
is continuous and coercive. 
This in turn guarantees, by the Lax--Milgram Lemma (see e.g.\ \citep[Ch. 6.2, Thm. 1]{Evans2022}), the existence and uniqueness of a solution to the pathwise weak formulation of Problem \ref{Problem:StrongFormulation_REPDE}, stated as follows, for any $\omega \in \Omega$.
\begin{problem}[Pathwise weak formulation of the elliptic RPDE] \label{Problem:PathwiseWeakFormulation_REPDE}
For $\omega \in \Omega$, and a realization of the forcing term $f(\omega) \in H^{-1}(\DD; \R)$ find $u(\omega) \in H_0^1(\DD; \R)$, such that
\begin{align*}
\widehat{B}(u(\omega), v; \omega) = \widehat{F}(v;\omega) \text{ for every } v \in H_0^1(\DD; \R),
\end{align*}
where $\widehat{F}(v;\omega) := \int_\DD f(\omega, x) v(x) \, d\lambda^d(x)$.
\end{problem}

If the stochastic diffusion coefficient satisfies Assumption \ref{Assumption:Requirements_ExUnique_WeakSol_REPDE} for deterministic bounds $a_{\min}, a_{\max} \in \R$, we can define the classical stochastic weak formulation of Problem \ref{Problem:StrongFormulation_REPDE} and ensure the existence and uniqueness of a weak solution. For that, we define the classical bilinear form $B: L^2(\Omega; H_0^1(\DD; \R)) \times L^2(\Omega; H_0^1(\DD; \R)) \to \R$, and the classical linear form $F: L^2(\Omega; H_0^1(\DD; \R)) \to \R$, via
\begin{align*}
B(u,v) &:= \int_\Omega \int_\DD a(\omega, x) \nabla u(\omega, x) \cdot \nabla v(\omega, x) \, d\lambda^d(x) \, dP(\omega), 
\\
F(v) &:= \int_\Omega \int_\DD f(\omega, x) v(\omega, x) \, d\lambda^d(x) \, dP(\omega).
\end{align*}
The corresponding classical stochastic weak formulation reads:
\begin{problem}[Classical stochastic weak formulation of the elliptic RPDE] \label{Problem:ClassicalWeakStochasticFormulation}
For a given stochastic forcing term $f \in L^2(\Omega; H^{-1}(\DD; \R))$ find $u \in L^2(\Omega; H_0^1(\DD; \R))$, such that
\begin{align*}
B(u,v) = F(v) \text{ for every } v \in L^2(\Omega; H_0^1(\DD; \R)).
\end{align*}
\end{problem}
We refer to \cite{Babuska2004}, \cite{Babuska2005} for more details on the elliptic random partial differential equation and the classical setup. 

For non-degenerate stochastic bounds $a_{\min}, a_{\max}: \Omega \to \R$ in Assumption \ref{Assumption:Requirements_ExUnique_WeakSol_REPDE}, the classical stochastic weak formulation, Problem \ref{Problem:ClassicalWeakStochasticFormulation}, is in general not well-posed. 
Furthermore, there are examples where stochastic Galerkin approximations, derived by this formulation whether it is well-posed or not, don't converge to the true solution in the natural norm (see e.g.\ \citep[Ch. 4]{Mugler2013}). 
In order to obtain a well-posed formulation that produces convergent stochastic Galerkin approximations, we follow \citep{Mugler2013} and work on weighted spaces. Let $\rho: \Omega \times \DD \to \R$ be a measurable random field that is $P$-almost surely and $\lambda^d$-almost everywhere positive. We define the $\rho$-weighted Lebesgue-Bochner space via
\begin{align*}
L_{\rho}^2(\Omega; H_0^1(\DD; \R)) := \left\lbrace\phi: \Omega \to H_0^1(\DD;\R) \text { strongly measurable} \, : \, \lVert \phi \rVert_{L_{\rho}^2(\Omega; H_0^1(\DD; \R))} < \infty \right\rbrace,
\end{align*} 
where the $\rho$-weighted norm is defined by
\begin{align*}
\lVert \phi \rVert_{L_{\rho}^2(\Omega; H_0^1(\DD; \R))} := 
	\left( \int_\Omega \int_\DD \lVert \nabla \phi(\omega, x) \rVert_2^2 \, \rho(\omega, x) \, d\lambda^d(x) \, dP(\omega) \right)^{1/2}. 
\end{align*}
The space $L_{\rho}^2(\Omega; H_0^1(\DD; \R))$, equipped with the inner product
\begin{align*}
\langle \phi , \psi \rangle_{L_{\rho}^2(\Omega; H_0^1(\DD;\R))}
:= \int_\Omega \int_\DD \nabla \phi(\omega, x) \cdot \nabla \psi(\omega, x) \, \rho(\omega, x) \, d\lambda^d(x) \, dP(\omega),
\end{align*}
is a separable Hilbert space. If $\rho: \Omega \to \R$ is a $P$-almost surely positive, spatial independent random variable, we define the $\rho$-weighted Lebesgue-Bochner space $L^2_\rho(\Omega; H)$, for arbitrary separable Hilbert spaces $H$, by
\begin{align*}
L^2_\rho(\Omega;H) := \left\lbrace \phi: \Omega \to H \text{ strongly measurable} \, : \, \lVert \phi \rVert_{L_{\rho}^2(\Omega; H)} < \infty \right\rbrace,
\end{align*}
where the norm is given by
\begin{align*}
\lVert \phi \rVert_{L_{\rho}^2(\Omega; H)}
= \left( \int_\Omega \lVert \phi(\omega) \rVert_H^2 \rho(\omega) \, dP(\omega) \right)^{1/2}.
\end{align*}
The space $L_{\rho}^2(\Omega; H)$, equipped with the inner product
\begin{align*}
\langle \phi , \psi \rangle_{L_{\rho}^2(\Omega; H)}
:= \int_\Omega \langle \phi(\omega), \psi (\omega) \rangle_H \, \rho(\omega) \, dP(\omega),
\end{align*}
is a separable Hilbert space, that is isomorphic to the tensor product Hilbert space
\begin{align*}
L_{\rho}^2(\Omega ; H) \cong L_{\rho}^2(\Omega; \R) \otimes H.
\end{align*}
Analogously, we define the $a_{\min}^{-1}$-weighted operators on the $a/a_{\min}$-weighted Lebesgue-Bochner space via
\begin{align*}
& B_{a_{\min}^{-1}}:
L_{a/a_{\min}}^2(\Omega; H_0^1(\DD; \R)) \times L_{a/a_{\min}}^2(\Omega; H_0^1(\DD; \R)) \to \R, 
	\\
	& \hspace*{0.9cm}
(u,v) \mapsto \int_\Omega \int_\DD \frac{a(\omega, x)}{a_{\min}(\omega)} \nabla u(\omega, x) \cdot \nabla v(\omega, x) \, d\lambda^d(x) \, dP(\omega), 
	\\
& F_{a_{\min}^{-1}} : L_{a/a_{\min}}^2(\Omega; H_0^1(\DD; \R)) \to \R
	\\
	& \hspace*{0.9cm}
v \mapsto \int_\Omega \int_\DD \frac{1}{a_{\min}(\omega)} f(\omega, x) v(\omega, x) \, d\lambda^d(x) \, dP(\omega).
\end{align*}
Note that the random variable $a_{\min}: \Omega \to \R$ stems from Assumption \ref{Assumption:Requirements_ExUnique_WeakSol_REPDE} and the weighted bilinear form $B_{a_{\min}^{-1}}$ is continuous and coercive on $L_{a/a_{\min}}^2(\Omega; H_0^1(\DD; \R)) \times L_{a/a_{\min}}^2(\Omega; H_0^1(\DD; \R))$. 
Furthermore, if $f \in L_{a_{\min}^{-2}}^2(\Omega; H^{-1}(\DD; \R))$, then the weighted linear form $F_{a_{\min}^{-1}}$ is continuous on $L_{a/a_{\min}}^2(\Omega; H_0^1(\DD; \R))$. Hence, by the Lax-Milgram Lemma, we get the existence and uniqueness of a solution to the following problem:
\begin{problem}[Weighted stochastic weak formulation of the elliptic RPDE] \label{Problem:WeightedWeakStochasticFormulation}
For a given stochastic forcing term $f \in L_{a_{\min}^{-2}}^2(\Omega; H^{-1}(\DD; \R))$ find $u \in L_{a/a_{\min}}^2(\Omega; H_0^1(\DD; \R))$, such that
\begin{align*}
B_{a_{\min}^{-1}}(u,v) = F_{a_{\min}^{-1}}(v) \text{ for every } v \in L_{a/a_{\min}}^2(\Omega; H_0^1(\DD; \R)).
\end{align*}
\end{problem}

\begin{remark}
For $\omega \in \Omega$, the realization $u(\omega, \cdot)$ of the weak solution $u \in L_{a/a_{\min}}^2(\Omega; H_0^1(\DD; \R))$ of the weighted stochastic formulation, Problem \ref{Problem:WeightedWeakStochasticFormulation}, is almost surely equal to the pathwise solution $\widehat{u}(\omega) \in H_0^1(\DD; \R)$ of the pathwise weak formulation, Problem \ref{Problem:PathwiseWeakFormulation_REPDE}, i.e.
\begin{align*}
u(\omega, \cdot) \equiv \widehat{u}(\omega) \text{ in } \DD.
\end{align*}
When considering deterministic bounds $a_{\min}, \, a_{\max} \in \R$ in Assumption \ref{Assumption:Requirements_ExUnique_WeakSol_REPDE}, the weak solution $u \in L_{a/a_{\min}}^2(\Omega; H_0^1(\DD; \R))$ coincides with the weak solution $\tilde{u} \in \LH$ of the classical stochastic formulation, Problem \ref{Problem:ClassicalWeakStochasticFormulation}.
\end{remark}

Based on these notions of solution we introduce different stochastic Galerkin approximations in the next section.
\section{Spectral expansion methods} \label{Chapter:SpectralExpansion}
\subsection{Generalized Polynomial Chaos Expansion} \label{Section:gPC}
A polynomial chaos (PC) expansion of a function, driven by underlying stochasticity, represents its spectral expansion in the stochastic space with respect to an orthonormal polynomial basis. Any random field in $L^2(\Omega;H)$, for a separable Hilbert space $H$, can be expanded in such a fashion.
For that, any orthogonal polynomial basis of $L^2(\Omega;\R)$ is suitable. 
Such a spectral expansion with Hermite polynomials, evaluated in a sequence of independent Gaussian random variables, was first introduced by Wiener (see \citep{Wiener1938}) and is called Hermite expansion.
In some cases, especially when the random field being expanded is not Gaussian, the convergence of the Hermite expansion can be quite slow. 
In \citep{XiuWienerAskey}, Xiu and Karniadakis proposed that using random variables following distributions that are closer to that of the random field can oftentimes result in a much better rate of convergence.
In this case, a polynomial basis is chosen that is orthogonal with respect to the law of these random variables.
For details, we refer to \citep{Karniadakis2005, Xiu2002, XiuWienerAskey}. This polynomial basis is called generalized polynomial chaos (gPC) basis.
A theoretical treatment of the convergence, and existence of such an improved basis, as well as examples where this method is not applicable can be found in \citep{Ernst2011}. 
 
In applications, one is usually interested in functions that are driven by a finite number of random perturbations, where possible truncations arise, for example, from truncated Karhunen-Loève-type expansions (see e.g.\ \citep[Ch. 2.3]{Ghanem1991}). 
In that context, we assume the arising random fields to be driven by finitely many random variables. 
Let $(\Omega, \mathcal{A}, P)$ be a complete probability space, and let $(\Gamma_n, \Sigma_n)$, for every $n = 1, \dots , N$ with $N \in \mathbb{N}$, be a measurable space for Borel subsets $\Gamma_n$ of $\mathbb{R}$, equipped with a corresponding $\sigma$-algebra $\Sigma_n$. Furthermore, let $\Gamma := \varprod_{n = 1}^N \Gamma_n$ denote the product space, endowed with the corresponding product $\sigma$-algebra $\Sigma := \bigotimes_{n= 1}^N \Sigma_n$.
\begin{definition} \label{Def:UnderlyingRVs_and_Distribution}
Let $(Y_n)_{n = 1, \dots , N}$ denote a sequence of independent, $\mathcal{A}-\Sigma_n$-measurable, continuous random variables $Y_n:\Omega \to \Gamma_n$, with law
\begin{align*}
\mu_n := P \circ Y_n^{-1} : \Sigma_n \to [0,1].
\end{align*}
For this sequence, we define the $\mathcal{A}-\Sigma$-measurable map $Y: \Omega \to \Gamma$ via
$Y(\omega) = (Y_1(\omega), \dots , Y_N(\omega))$. 
\end{definition}
Due to the independence of the random variables $(Y_n)_{n = 1, \dots, N}$, the law of $Y$ is given by the product measure $\mu = \bigotimes_{n=1}^N \mu_n$.
We refer to \cite[Ch. 5, Ch. 10.6]{Cohn2013} for more details on product spaces and measures.
To obtain a PC expansion that is evaluated in the sequence $(Y_n)_{n = 1, \dots , N}$, we construct a polynomial basis of $L^2(\Gamma; \R):= L^2((\Gamma, \Sigma, \mu); (\R, \mathcal{B}(\R))$, that is orthonormal with respect to $\mu$. In order to do so, we require the random variables $(Y_n)_{n = 1, \dots , N}$ to meet the following assumptions:

\begin{assumption} \label{Assumption:RVsReq_gPC_MomentsAndDeterminateMeasure} \,
\vspace*{-0.1cm}
\begin{itemize} 
\item[(i)] For every $n = 1, \dots, N$, the random variable $Y_n$ has finite moments, i.e.\ for every $k \in \mathbb{N}$ the $k$-th moment, given by $\mathbb{E}(Y_n^k) = \int_\Omega Y_n^k(\omega) \, dP(\omega) = \int_{\Gamma_n} y^k \, d\mu_n(y)$, is finite.
\item[(ii)] For every $n = 1, \dots , N$, the distribution $\mu_n$ is determinate, i.e.\ the measure $\mu_n$ is uniquely determined by its moments $\big(\mathbb{E}(Y_n^k)\big)_{k \in \mathbb{N}}$, meaning no other probability distribution possesses an identical sequence of moments. 
\end{itemize}
\end{assumption}
Note that in order to construct an orthonormal system of polynomials in $L^2(\Gamma; \R)$, the first assumption suffices (see \cite[Asm. 3.1]{Ernst2011} and thereafter).
For the space $\Gamma$ of finite products, there are conditions for which the measure $\mu$ is determinate, this is for example the case for uniform and normal distributions (see \cite[Thm.\ 3.4, Thm.\ 3.7]{Ernst2011}). 
The log-normal distribution however is not determinate, so the corresponding orthonormal system of polynomials does not constitute a basis of $L^2(\Gamma; \R)$. 
Therefore, there is no gPC expansion for log-normal random fields, and they are usually expanded in Hermite polynomials, evaluated in a sequence of independent standard normal distributed random variables. In applications, the expansion can only be truncated at a high polynomial degree due to the slow convergence of the Hermite expansion for log-normal random fields. This results in a high-dimensional system, which entails high computational costs.

The basis polynomials of $L^2(\Gamma_n ; \R)$ can be constructed by applying the Gram-Schmidt orthonormalization (\cite[Eq. (1.1.11)]{Gautschi2004}) to the monomials with respect to corresponding measures. This results in a convenient three term recursion, and vice versa, meaning that polynomials constructed by a certain three term recursion result in an orthonormal polynomial basis (see e.g.\ \citep[Ch.\ 1.3]{Gautschi2004}). 
Basis polynomials that are orthonormal with respect to certain probability laws are tabulated for many common distributions, see for example \cite[Tbl. 1.1, Tbl. 1.2]{Gautschi2004}, 
and \cite[Tbl. 4.1]{XiuWienerAskey}. Here we mention the Legendre polynomials for the uniform distribution on the interval $[-1,1]$, $\mathcal{U}([-1,1])$, and the probabilist's Hermite polynomials for the standard normal distribution, $\mathcal{N}(0,1)$. 
For more details regarding the existence and construction of orthonormal polynomial bases, we refer to
\citep[Thm. 1.6, Thm. 1.27, Thm. 1.29]{Gautschi2004},
\cite[Ch.\ 2.2]{Schwab2011}.

Let $(p_{n,i})_{i \in \mathbb{N}_0}$, for $n = 1, \dots , N$, be an orthonormal polynomial basis of $L^2(\Gamma_n; \R)$, with respect to the law $\mu_n$ of the random variable $Y_n$ in which the basis is evaluated, i.e.\
\begin{align*}
\langle p_{n,i} , p_{n,j} \rangle_{L^2(\Gamma_n ; \R)} = \int_{\Gamma_n} p_{n,i}(y) \, p_{n,j}(y) \, d \mu_n(y) = \delta_{ij},
\end{align*}
where $\delta_{ij}$ denotes the Kronecker-Delta. 
Therefore, every function $\phi \in L^2(\Gamma_n; \R)$ admits the spectral expansion
\begin{align*}
\phi(y) = \sum_{i \in \mathbb{N}_0} \langle \phi , p_{n,i} \rangle_{L^2(\Gamma_n;\R)} \, p_{n,i}(y), \quad y \in \Gamma_n,
\end{align*}
with convergence in $L^2(\Gamma_n;\R)$.
Every basis function $p_{n,i}: \Gamma_n \to \mathbb{R}$, constructed by the above-mentioned and referenced techniques, is a polynomial of degree $i \in \mathbb{N}_0$, and satisfies $p_{n,0} \equiv 1$ for every $n = 1, \dots , N$. 
Define the set of index sequences $\mathcal{I}_N := \{ \nu = (\nu_1, \dots , \nu_N)  \in \mathbb{N}_0^N \, : \, \nu_i \in \mathbb{N}_0 \}$, and for every index sequence $\nu \in \mathcal{I}_N$ a corresponding tensor product function
\begin{align*}
p_\nu := \bigotimes_{n=1}^N p_{n, \nu_n}: \Gamma \to \mathbb{R}, \quad y=(y_1, \dots , y_N) \mapsto \prod_{n = 1}^N p_{n, \nu_n}(y_n).
\end{align*}
For every $\nu \in \mathcal{I}_N$, the tensor product function $p_\nu$ is a multivariate polynomial of degree 
$\lvert \nu \rvert := \nu_1 + \dots + \nu_N$,
 and constitutes a basis of the product space $L^2(\Gamma; \R)$, according to the following theorem (see e.g.\ \citep[Thm.\ 3.6]{Ernst2011}):

\begin{theorem} \label{Theorem:gPC_basis}
Let $Y = (Y_1, \dots , Y_N)$, for $N \in \mathbb{N}$, be the random variable defined in Definition \ref{Def:UnderlyingRVs_and_Distribution}, such that the sequence $(Y_n)_{n = 1, \dots , N}$ satisfies Assumption \ref{Assumption:RVsReq_gPC_MomentsAndDeterminateMeasure}, and let $(p_{n,i})_{i \in \mathbb{N}_0}$, for $n = 1, \dots , N$, be the corresponding orthonormal polynomial basis of $L^2(\Gamma_n; \R)$. Then, the tensor product polynomials $(p_\nu)_{\nu \in \mathcal{I}_N} $ form an orthonormal polynomial basis of $L^2(\Gamma; \R)$, which is called the \textit{polynomial chaos basis}.
\end{theorem}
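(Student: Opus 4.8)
The statement asserts that $(p_\nu)_{\nu\in\mathcal{I}_N}$ is orthonormal and total in $L^2(\Gamma;\R)$; I would establish these two properties separately and read off the degree claim from the construction. For orthonormality, note first that each $p_\nu$ lies in $L^2(\Gamma;\R)$: since $|p_\nu(y)|^2=\prod_{n=1}^N|p_{n,\nu_n}(y_n)|^2$ is a nonnegative product of one-coordinate functions, Tonelli's theorem for the product measure $\mu=\bigotimes_{n=1}^N\mu_n$ gives $\|p_\nu\|_{L^2(\Gamma;\R)}^2=\prod_{n=1}^N\|p_{n,\nu_n}\|_{L^2(\Gamma_n;\R)}^2=1$, the one-dimensional integrals being finite by Assumption~\ref{Assumption:RVsReq_gPC_MomentsAndDeterminateMeasure}(i). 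For $\nu,\eta\in\mathcal{I}_N$ the product $p_\nu p_\eta$ is then $\mu$-integrable by the Cauchy--Schwarz inequality, so Fubini's theorem applies and $\langle p_\nu,p_\eta\rangle_{L^2(\Gamma;\R)}=\prod_{n=1}^N\langle p_{n,\nu_n},p_{n,\eta_n}\rangle_{L^2(\Gamma_n;\R)}=\prod_{n=1}^N\delta_{\nu_n\eta_n}=\delta_{\nu\eta}$, using orthonormality of each one-dimensional basis.

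The crux is totality, which I would obtain by a Hilbert-space tensor product argument. Under Assumption~\ref{Assumption:RVsReq_gPC_MomentsAndDeterminateMeasure}(i) the monomials lie in $L^2(\Gamma_n;\R)$ and Gram--Schmidt yields the orthonormal system $(p_{n,i})_{i\in\mathbb{N}_0}$; under Assumption~\ref{Assumption:RVsReq_gPC_MomentsAndDeterminateMeasure}(ii) the determinacy of $\mu_n$ guarantees completeness of this system, i.e.\ $(p_{n,i})_{i\in\mathbb{N}_0}$ is an orthonormal basis of $L^2(\Gamma_n;\R)$ for each $n$ (the classical consequence of determinacy of the one-dimensional moment problem; cf.\ \citep[Ch.~3]{Ernst2011}). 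I would then invoke the canonical unitary isomorphism $L^2(\Gamma;\R)\cong\bigotimes_{n=1}^N L^2(\Gamma_n;\R)$ coming from the product structure $\mu=\bigotimes_{n=1}^N\mu_n$, obtained by iterating the binary identification $L^2(\mu_1\otimes\mu_2)\cong L^2(\mu_1)\otimes L^2(\mu_2)$ and sending an elementary tensor $\phi_1\otimes\cdots\otimes\phi_N$ to the function $(y_1,\dots,y_N)\mapsto\prod_{n=1}^N\phi_n(y_n)$; this map carries $\bigotimes_{n=1}^N p_{n,\nu_n}$ exactly to $p_\nu$. Since tensor products of orthonormal bases of Hilbert spaces form an orthonormal basis of the Hilbert-space tensor product, it follows that $(p_\nu)_{\nu\in\mathcal{I}_N}$ is an orthonormal basis of $L^2(\Gamma;\R)$. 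A more hands-on alternative would be to take $g\in L^2(\Gamma;\R)$ orthogonal to every $p_\nu$, observe that $g$ is then orthogonal to every product monomial $\prod_{n=1}^N y_n^{k_n}$, and reduce to the one-dimensional case by successive conditioning on the coordinates; the tensor-product route avoids this bookkeeping.

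The degree statement is immediate: each $p_{n,\nu_n}$ is a polynomial of degree $\nu_n$ with $p_{n,0}\equiv 1$, so $p_\nu(y)=\prod_{n=1}^N p_{n,\nu_n}(y_n)$ is a multivariate polynomial of total degree $\nu_1+\cdots+\nu_N=|\nu|$. The one genuinely nontrivial step is the completeness of the one-dimensional polynomial systems: the passage from determinacy of $\mu_n$ to density of polynomials in $L^2(\Gamma_n;\R)$ is a real theorem about the moment problem rather than a formal manipulation, and one must also be careful that the abstract tensor-product isomorphism truly identifies $\bigotimes_{n=1}^N p_{n,\nu_n}$ with the concrete function $p_\nu$ up to a $\mu$-null set. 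Both facts are available in the cited literature, so the argument reduces to assembling them correctly; indeed the statement is essentially \citep[Thm.~3.6]{Ernst2011}.
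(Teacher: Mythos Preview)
Your proposal is correct and in fact more detailed than what the paper does: the paper states the theorem without proof, simply citing \citep[Thm.~3.6]{Ernst2011}, which you also identify as the source. Your sketch---orthonormality via Fubini/Tonelli factorisation over the product measure, and totality via the Hilbert-space tensor product identification $L^2(\Gamma;\R)\cong\bigotimes_{n=1}^N L^2(\Gamma_n;\R)$ together with the classical fact that determinacy of $\mu_n$ implies density of polynomials in $L^2(\Gamma_n;\R)$---is exactly the standard argument behind that reference, so there is nothing to correct.
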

Note that, if the expanded random fields are driven directly by $Y: \Omega \to \Gamma$, used in Theorem \ref{Theorem:gPC_basis}, we call the resulting basis generalized polynomial chaos basis and benefit from the faster convergence, as discussed in the introduction to this section.
As a result of Theorem \ref{Theorem:gPC_basis}, every function $g \in L^2(\Gamma; \R)$ can be represented by
\begin{align*}
g(y) = \sum_{\nu \in \mathcal{I}_N} g_\nu \, p_\nu(y), 
\end{align*}
where $(p_\nu)_{\nu \in \mathcal{I}_N}$ denotes the PC basis of $L^2(\Gamma; \R)$, and the spectral coefficients $g_\nu \in \R$ are given by
\begin{align*}
g_\nu = \langle g , p_\nu \rangle_{L^2(\Gamma; \R)} = \int_\Gamma g(y) \, p_\nu(y) \, d\mu(y). 
\end{align*}
The convergence of the abstract Fourier series is in $L^2(\Gamma; \R)$. As a direct consequence of the construction of $Y: \Omega \to \Gamma$, the PC basis $(p_\nu)_{\nu \in \mathcal{I}_N}$ of $L^2(\Gamma; \R)$, constitutes a PC basis $(p_\nu(Y))_{\nu \in \mathcal{I}_N}$ of $L^2(\Omega ; \R)$. 
For a more accessible notation, we indicate the PC basis polynomials by natural numbers $k \in \mathbb{N}_0$, instead of index sequences $\nu \in \mathcal{I}_N$. 
For this purpose, the PC basis $(p_k)_{k \in \mathbb{N}_0}$, indexed by natural numbers, which arises from the tensor product basis $(p_\nu)_{\nu \in \mathcal{I}_N}$, is determined by the so-called graded lexicographic ordering. 
The index sequences $\nu \in \mathcal{I}_N$ are ordered and then each assigned to a natural index by counting them. 
At first, the resulting polynomial degree of two index sequences are compared against each other by the grading function $\lvert \, \cdot \, \rvert: \mathcal{I}_N \to \mathbb{N}_0, \, \nu \mapsto \nu_1 + \dots + \nu_N$. 
Two index sequences that result in the same polynomial degree are ordered lexicographically. 
Formally the ordering is given by the following definition:
\begin{definition} \label{Def:graded_lexic_ordering}
An index sequence $\nu^{(1)} = \big(\nu_1^{(1)} , \dots , \nu_N^{(1)} \big) \in \mathcal{I}_N$ is said to be smaller, in the sense of the graded lexicographic ordering, than $\nu^{(2)} = \big(\nu_1^{(2)} , \dots , \nu_N^{(2)} \big)  \in \mathcal{I}_N$, if either $\lvert \nu^{(1)} \rvert < \lvert \nu^{(2)} \rvert$, or $\lvert \nu^{(1)} \rvert = \lvert \nu^{(2)} \rvert$ and $ \nu^{(1)}_{i^*} < \nu^{(2)}_{i^*}$, where $i^* := \min \{i \in \{1, \dots , N\} \, : \, \nu^{(1)}_i \neq \nu^{(2)}_i \} $. 
The basis polynomials $(p_\nu)_{\nu \in \mathcal{I}_N}$ are ordered with respect to this relation and indexed accordingly.

\end{definition}
\begin{example}
The normed probabilist's Hermite polynomials $(\operatorname{He}_k)_{k \in \mathbb{N}_0}$, given by
\begin{align*}
\operatorname{He}_k(y) = \frac{(-1)^k}{\sqrt{k!}} e^{y^2/2} \frac{d^k}{dy^k} e^{-y^2/2},
\end{align*}
form an orthonormal basis of $L^2(\R ; \R)$, with respect to the standard normal distribution $\mu = \mathcal{N}(0,1)$. 
We assume two independent underlying random variables $Y_1, Y_2 \sim \mathcal{N}(0,1)$ (i.e. $N = 2$), and truncate the polynomial chaos expansion at a maximal polynomial degree of $P = 2$. 
Therefore the chaos basis is formed on the index set $\mathcal{I}_{N,P} := \{ \nu \in \mathcal{I}_N \, : \, \lvert \nu \rvert \leq P \}$. This results in a truncated PC expansion with $\frac{(N+P)!}{N!P!} = 6$ basis polynomials, given in their graded lexicographic ordering as follows:
\begin{center}
\begin{tabular}{l || c | c | c | c | c | c}
Index sequence $\nu \in \mathcal{I}_{2,2}$ & $(0,0)$ & $(0,1)$ & $(1,0)$ & $(0,2)$ & $(1,1)$ & $(2,0)$  \\
Associated index $k \in \mathbb{N}_0$ & 0 & 1 & 2 & 3 & 4 & 5 \\
\multirow{2}{*}{Basis Polynomial $p_k(y_1,y_2)$} &
 $\operatorname{He}_0(y_1)$  &  $\operatorname{He}_0(y_1)$  &  $\operatorname{He}_1(y_1)$  &  $\operatorname{He}_0(y_1)$  &  $\operatorname{He}_1(y_1)$ &  $\operatorname{He}_2(y_1)$  \\ 
& $\hspace*{0.2cm}  \cdot \operatorname{He}_0(y_2)$ & $\hspace*{0.2cm}  \cdot \operatorname{He}_1(y_2)$ & $\hspace*{0.2cm}  \cdot \operatorname{He}_0(y_2)$ & $\hspace*{0.2cm}  \cdot \operatorname{He}_2(y_2)$ & $\hspace*{0.2cm}  \cdot \operatorname{He}_1(y_2)$ & $\hspace*{0.2cm} \cdot \operatorname{He}_0(y_2)$ 

\end{tabular}
\end{center}

\end{example}

\begin{remark} \label{Remark:gPCHilbertValuedRF}
We can extend the PC expansion of $\R$-valued random variables to certain random fields. Let $X \in L^2(\Omega;H)$ be a strongly measurable random field, let $(H, \langle \cdot , \cdot \rangle_H)$ be a separable Hilbert space, and let $(p_k(Y))_{k \in \N_0}$ be the polynomial chaos basis of $L^2(\Omega; \R)$, then the random field $X: \Omega \to H$ admits the PC expansion
\begin{align*}
X(\omega) = \sum_{k \in \N_0} x_k \, p_k(Y(\omega)),
\end{align*}
with convergence in $L^2(\Omega;H)$, and spectral coefficients $x_k \in H$, for $k \in \N_0$. This is evident from the isomorphic relation $L^2(\Omega; H) \cong L^2(\Omega; \R) \otimes H$, for a separable Hilbert space $H$ (see also \citep[Rem.\ 3.12]{Ernst2011}). 
\end{remark}

Due to the construction of the polynomial basis in the image space of the underlying random variables, we can represent the random fields by deterministic functions.

\begin{definition}  \label{Def:DetRepRandomFields}
Let $(H(\DD; \R), \langle \cdot , \cdot \rangle_{H(\DD; \R)})$ be a separable Hilbert space over the domain $\DD \subset \R^d$, consisting of functions $g: \DD \to \R$ and let $g: \Omega \times \DD \to \R, \, g \in L^2(\Omega;H(\DD; \R))$, be a random field with PC expansion
\begin{align*}
g(\omega, x) = \sum_{k \in \mathbb{N}_0} g_k(x) \, p_k(Y(\omega)),
\end{align*}
where $g_k \in H(\DD;\R)$ for all $k \in \mathbb{N}_0$. The function $\bar{g}: \Gamma \times \DD \to \R, \, \bar{g} \in L^2(\Gamma; H(\DD; \R))$, defined by
\begin{align*}
\bar{g}(y,x) = \sum_{k \in \mathbb{N}_0} g_k(x) \, p_k(y),
\end{align*}
is called deterministic representation of $g$.
\end{definition}
Note that we get for a random field $g$ and its deterministic representation $\bar{g}$ the pathwise relation
\begin{align*}
\bar{g}(Y(\omega), x) = g(\omega, x) \text{ for } P\text{-almost every } \omega \in \Omega.
\end{align*}

\subsection{Stochastic Galerkin Method}
In the first step, we expand the stochastic diffusion coefficient and forcing term into their respective polynomial chaos expansions.
We mainly focus on the strong formulation of the elliptic random PDE, Problem \ref{Problem:StrongFormulation_REPDE}, and the weighted stochastic weak formulation of the elliptic random PDE (resp.\ the classical stochastic weak formulation, Problem \ref{Problem:ClassicalWeakStochasticFormulation}, if it is well-defined). The polynomial chaos expansions used for this are truncated at a maximal polynomial degree $P \in \N_0$. 

Consider the probability space $(\Omega, \mathcal{A}, P)$ and let $Y = (Y_1, \dots, Y_N): \Omega \to \Gamma$, $\Gamma \subset \R^N$, be the multivariate random variable, meeting Assumption \ref{Assumption:RVsReq_gPC_MomentsAndDeterminateMeasure}, in which the polynomial basis is evaluated. Furthermore, let $(p_k(Y))_{k \in \N_0}$ denote the polynomial chaos basis of $L^2(\Omega;\R)$ and accordingly let $(p_k)_{k \in \N_0}$ denote the polynomial chaos basis of $L^2(\Gamma; \R)$. 
When working with the strong form of the elliptic random PDE, we have to place additional assumptions on the random fields in order for the strong residual to be meaningful.
\begin{assumption} \label{Assumption:additional_assm_strong_residual}
In addition to Assumption \ref{Assumption:Requirements_ExUnique_WeakSol_REPDE}, we also assume
\begin{align*}
a \in L^2(\Omega; H^1(\DD; \R)), \quad f \in L^2(\Omega; L^2(\DD; \R)).
\end{align*}
\end{assumption}
For the stochastic Galerkin approximation of the weak form of the elliptic  random PDE, Assumption \ref{Assumption:Requirements_ExUnique_WeakSol_REPDE} suffices for the well-posedness, and the diffusion coefficient is expanded in the space $L^2(\Omega; L^2(\DD; \R))$.
The deterministic representation of the stochastic diffusion coefficient and the stochastic forcing term read
\begin{align*}
\bar{a}(y, x) = \sum_{k \in \N_0} a_k(x) \, p_k(y), \quad
\bar{f}(y, x) = \sum_{k \in \N_0} f_k(x) \, p_k(y), \quad y \in \Gamma, \, x \in \DD,
\end{align*}
where the spectral coefficients satisfy $a_k \in H^1(\DD; \R)$ or $a_k \in L^2(\DD; \R)$ respectively, and $f_k \in L^2(\DD;\R)$ or $f_k \in H^{-1}(\DD; \R)$ for every $k \in \N_0$. 
With these assumptions, and by the Doob-Dynkin Lemma (see \citep[Lem. 1.14]{Kallenberg2021}), the solution $u:\Omega \times \overline{\DD} \to \R$ of the elliptic random PDE can also be expressed by $Y: \Omega \to \Gamma$. We choose $L^2(\Omega; H_0^2(\DD; \R))$ as solution space for the strong form and $L^2(\Omega; H_0^1(\DD; \R))$ as solution space for the weak form, and obtain the deterministic representation $\bar{u}: \Gamma \times \overline{\DD} \to \R$ of the unknown solution with respect to  $(p_k)_{k \in \N_0}$, as
\begin{align*}
\bar{u}(y, x) = \sum_{k \in \N_0} u_k(x) \, p_k(y), \quad y \in \Gamma, \, x \in \overline{\DD},
\end{align*}
where $u_k \in H_0^2(\DD; \R)$ or $u_k \in H_0^1(\DD; \R)$ for every $k \in \N_0$.
For the stochastic Galerkin approximation, the deterministic representations are truncated at a maximal polynomial degree of $P \in \N_0$. 
This results in the solution space 
\begin{align*}
L^{2;(M)}(\Gamma; H(\DD;\R)) \cong L^{2;(M)}(\Gamma; \R) \otimes H(\DD; \R),
\end{align*}
where $H(\DD; \R) \in \{H_0^1(\DD; \R), H_0^2(\DD; \R)\}$ for the respective formulations, and the finite dimensional space $L^{2;(M)}(\Gamma;\R)$ is spanned by the first $M + 1= \frac{(N + P)!}{N! P!}$ basis polynomials $(p_k)_{k \in \{ 0, \dots , M \} }$ of $L^2(\Gamma; \R)$.
We further assume that the truncated deterministic representation of the stochastic diffusion coefficient satisfies Assumption \ref{Assumption:Requirements_ExUnique_WeakSol_REPDE}, and refer to the following remark:
\begin{remark}
There are additional assumptions, for which every truncation of the Karhunen-Loève (KL) expansion of the stochastic diffusion coefficient satisfies Assumption \ref{Assumption:Requirements_ExUnique_WeakSol_REPDE}. 
Such conditions can, for example, be found in \citep[Asm. 6.1]{Babuska2004, Babuska2005} for deterministic bounds $a_{\min}, a_{\max} \in \R$, or in \citep[Prop. 2.6]{Charrier2013} for log-normal random fields. 
There are also additional assumptions, such that every truncation of the polynomial chaos expansion of the stochastic diffusion coefficient satisfies Assumption \ref{Assumption:Requirements_ExUnique_WeakSol_REPDE}. 
However, Hermite expansions of log-normal random fields are generally excluded from these assumptions, and we have to ensure the validity for every truncation used. For example, the Hermite expansion of a standard log-normal random variable $X: \Omega \to \R_+, \, \log(X) \sim \mathcal{N}(0,1)$, reads
\begin{align*}
X(\omega) = e^{1/2} \sum_{k = 0}^{\infty} \frac{1}{k!} \operatorname{He}_k(Y(\omega)),
\end{align*}
where $\operatorname{He}_k: \R \to \R$ denotes the $k$-th probabilist's Hermite polynomial and $Y \sim \mathcal{N}(0,1)$. A truncation at a maximal polynomial degree of $P = 1$ is negative with probability greater zero, whereas a truncation at a maximal polynomial degree of $P = 2$ meets Assumption \ref{Assumption:Requirements_ExUnique_WeakSol_REPDE}. 
\end{remark}
The stochastic Galerkin method, in the strong-residual form, seeks to find an approximation $\bar{u}^{(M)}: \Gamma \times \overline{\DD} \to \R$, such that the truncated, strong residual $\mathcal{R}_{\bar{u}^{(M)}} : \Gamma \times \DD \to \R$, defined by
\begin{align*}
\mathcal{R}_{\bar{u}^{(M)}}(y,x) := \nabla \cdot \left( \bar{a}^{(M)}(y,x) \nabla \bar{u}^{(M)}(y,x) \right) + \bar{f}^{(M)}(y,x),
\end{align*}
is orthogonal to the space $L^{2;(M)}(\Gamma; \R)$ for $\lambda^d$-almost every $x \in \DD$. Here,
\begin{align*}
\bar {a}^{(M)}(y,x):= \sum_{i = 0}^M a_i(x) \, p_i(y) , \quad \bar{f}^{(M)}(y,x) := \sum_{i = 0}^M f_i(x) \, p_i(y)
\end{align*}
denote the truncated deterministic representation of the diffusion coefficient and forcing term. 
This is achieved by a Galerkin projection, which results in a system of $M + 1$, usually coupled, deterministic PDEs.
\begin{problem}[Strong form stochastic Galerkin approximation] \label{Problem:StrongFormSGA}
For the truncated deterministic representation $\bar{f}^{(M)} \in L^{2;(M)}(\Gamma; L^2(\DD; \R))$ of a given stochastic forcing term, find the spectral coefficients $u_0, \dots , u_M \in H_0^2(\DD; \R)$ of $\bar{u}^{(M)}(y,x) = \sum_{i = 0}^M u_i(x) \, p_i(y) \in L^{2;(M)}(\Gamma; H_0^2(\DD; \R))$ such that for every $ k = 0, \dots, M$
\begin{align*}
\big\langle \mathcal{R}_{\bar{u}^{(M)}}(\cdot, x) \, , \, p_k \big\rangle_{L^2(\Gamma;\R)}
	&=
0 
	\qquad
 x \in \DD, 
 	\\
\big\langle \bar{u}^{(M)}(\cdot, x) \, , \, p_k \big\rangle_{L^2(\Gamma;\R)} 
	&=
0
	\qquad
 x \in \partial \DD.
\end{align*}  
\end{problem}
Note that for a stochastic Galerkin approximation $\bar{u}^{(M)}(y,x)$, i.e.\ the solution of Problem \ref{Problem:StrongFormSGA}, the random field $u^{(M)}(\omega, x) =  \bar{u}^{(M)}(Y(\omega), x) = \sum_{i = 0}^M u_i(x) \, p_i(Y(\omega)) \in L^{2;(M)}(\Omega; H_0^2(\DD; \R))$ yields a $L^{2;(M)}(\Omega; \R)$-orthogonal residual of the strong-form random elliptic PDE, Problem \ref{Problem:StrongFormulation_REPDE}.

For the stochastic Galerkin approximation of the weak formulation of the elliptic random PDE, we define the $a_{\min}^{-1}$-weighted, truncated deterministic representation of the bilinear and linear form. For this we note that since $a_{\min}^{-1} \in L^2(\Omega; \R)$ is $\sigma(a)-\mathcal{B}(\R)$-measurable, the deterministic representation $\bar{a}_{\min}^{\, -1} \in L^2(\Gamma; \R)$, with respect to the PC basis $(p_k)_{k \in \N_0}$, exists by the Doob-Dynkin Lemma. Furthermore, by Assumption \ref{Assumption:Requirements_ExUnique_WeakSol_REPDE}, we get $\frac{\bar{a}(y,x)}{\bar{a}_{\min}(y)} \in L^2(\Gamma; L^2(\DD; \R))$. Therefore, we directly expand the function $\bar{\mathbf{a}}(y,x) := \frac{\bar{a}(y,x)}{\bar{a}_{\min}(y)}$ into the PC expansion, and get
\begin{align*}
& \twB: \tLH \times \tLH \to \R,
	\\
	& \hspace*{0.9cm}
(\tu, \tv) \mapsto 
\int_\Gamma \int_\mathcal{D} \bar{\mathbf{a}}^{(M)}(y,x) \, \nabla \tu (y,x) \cdot \nabla \tv (y,x) \, d \lambda^d(x) \, d\mu(y),
	\\
	&
\twF : \tLH \to \R,
	\\
	& \hspace*{0.9cm}
\tv \mapsto 
\int_\Gamma \int_\DD (\bar{a}_{\min}^{\, -1})^{(M)}(y) \tf (y, x) \tv (y, x)\, d\lambda^d(x) \, d\mu(y).
\end{align*}
Note that the unweighted deterministic representations of the operators $\bar{B}^{(M)}, \, \bar{F}^{(M)}$ are defined analogously, where the weight $a_{\min}^{-1}$ is replaced by $1$.
With that, we formulate the following two problems:
\begin{problem}[Classical weak form stochastic Galerkin approximation] \label{Problem:ClassicalWeakSGA}
For the truncated deterministic representation $\tf \in L^{2; (M)}(\Gamma; H^{-1}(\DD; \R))$ of a given stochastic forcing term, find the spectral coefficients $u_0, \ldots , u_M \in H_0^1(\DD; \R))$ of $\tu (y,x) = \sum_{i = 0}^M u_i(x) p_i(y) \in \tLH$ such that
\begin{align*}
\bar{B}^{(M)} (\tu , \tv ) = \bar{F}^{(M)} (\tv) \text{ for all } \tv \in \tLH.
\end{align*}
\end{problem}

\begin{problem}[Weighted weak form stochastic Galerkin approximation] \label{Problem:WeightedWeakSGA}
For the truncated deterministic representation $\tf \in L^{2; (M)}(\Gamma; H^{-1}(\DD; \R))$ of a given stochastic forcing term, find the spectral coefficients $u_0, \ldots , u_M \in H_0^1(\DD; \R))$ of $\tu (y,x) = \sum_{i = 0}^M u_i(x) p_i(y) \in \tLH$ such that
\begin{align*}
\twB (\tu , \tv ) = \twF (\tv) \text{ for all } \tv \in \tLH.
\end{align*}
\end{problem}
Note that, in contrast to the weighted stochastic weak formulation, Problem \ref{Problem:WeightedWeakStochasticFormulation}, the solution and test spaces, as well as the space for the truncated forcing term are not weighted. While this approach is in general not well-defined, it is for the chosen subspace under the additional assumption $\frac{a_{\max}}{a_{\min}} \in L^r(\Gamma; \R)$ for some $r > 1$.

\begin{theorem} \label{Thm:RitzEnergyFunction}
Let $\frac{a_{\max}}{a_{\min}} \in L^r(\Gamma; \R)$ for some $r > 1$, then there exists a unique solution $u^*\in \tLH$ of Problem \ref{Problem:WeightedWeakSGA}. Furthermore, the Ritz energy functional $\twE: \tLH \to \R$, defined by
\begin{align*}
\twE(\tu) := \frac{1}{2} \twB (\tu, \tu) - \twF (\tu),
\end{align*}
has a unique minimum. The corresponding minimizer 
\begin{align*}
u^* = \underset{\tu \in \tLH}{\operatorname{arg \, min}} \twE (\tu)
\end{align*}
is the unique stochastic Galerkin approximation of Problem \ref{Problem:WeightedWeakStochasticFormulation}, i.e.\ the solution of Problem \ref{Problem:WeightedWeakSGA}.
\end{theorem}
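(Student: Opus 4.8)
The plan is to read the statement as a direct application of the Lax--Milgram lemma on the Hilbert space $\tLH$, followed by the classical identification of the solution of a symmetric coercive variational problem with the minimiser of its energy. Since $\tLH\cong L^{2;(M)}(\Gamma;\R)\otimes H_0^1(\DD;\R)$ is a separable Hilbert space (a finite-dimensional stochastic factor tensored with $H_0^1(\DD;\R)$), and $\twB$ is visibly bilinear and, because $\nabla\tu\cdot\nabla\tv=\nabla\tv\cdot\nabla\tu$ while the coefficient $\bar{\mathbf{a}}^{(M)}$ does not depend on the arguments, symmetric, the whole proof reduces to three estimates on $\tLH$: boundedness of $\twB$, coercivity of $\twB$, and boundedness of $\twF$. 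Once these are in place, Lax--Milgram produces the unique $u^*\in\tLH$ solving Problem~\ref{Problem:WeightedWeakSGA}, and completing the square in the energy gives the minimiser assertion; the identification with ``the stochastic Galerkin approximation of Problem~\ref{Problem:WeightedWeakStochasticFormulation}'' is the definition of the latter.

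The technical heart --- and the only place the hypothesis $\frac{a_{\max}}{a_{\min}}\in L^r(\Gamma;\R)$, $r>1$, is genuinely needed --- is the boundedness, indeed the finiteness, of $\twB$. I would prove it in two steps. First, I bound the polynomial chaos coefficients $\mathbf{a}_k(x)=\int_\Gamma\bar{\mathbf{a}}(y,x)\,p_k(y)\,d\mu(y)$ of $\bar{\mathbf{a}}=\bar a/\bar a_{\min}$ in $L^\infty(\DD;\R)$: using $1\le\bar{\mathbf{a}}(y,x)\le\bar a_{\max}(y)/\bar a_{\min}(y)$ (Assumption~\ref{Assumption:Requirements_ExUnique_WeakSol_REPDE}) and Hölder in $\Gamma$ with exponents $r,r'$,
\begin{align*}
\|\mathbf{a}_k\|_{L^\infty(\DD;\R)}\le\int_\Gamma\frac{\bar a_{\max}(y)}{\bar a_{\min}(y)}\,|p_k(y)|\,d\mu(y)\le\Big\|\frac{a_{\max}}{a_{\min}}\Big\|_{L^r(\Gamma;\R)}\,\|p_k\|_{L^{r'}(\Gamma;\R)}<\infty,
\end{align*}
where $\|p_k\|_{L^{r'}(\Gamma;\R)}<\infty$ precisely because $r>1$ makes $r'<\infty$ and every polynomial has finite $\mu$-moments by Assumption~\ref{Assumption:RVsReq_gPC_MomentsAndDeterminateMeasure}(i); for $r=1$ one would need an $L^\infty(\Gamma)$-bound on $p_k$, which fails on an unbounded $\Gamma$. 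Consequently $\|\bar{\mathbf{a}}^{(M)}(y,\cdot)\|_{L^\infty(\DD;\R)}\le g(y):=\sum_{k=0}^{M}\|\mathbf{a}_k\|_{L^\infty(\DD;\R)}\,|p_k(y)|$ with $g\in L^q(\Gamma;\R)$ for every $q<\infty$. Second, I use an inverse estimate on the finite-dimensional stochastic factor: for any separable Hilbert space $Z$ and $w=\sum_{k=0}^{M}w_k\,p_k\in L^{2;(M)}(\Gamma;\R)\otimes Z$ one has $\|w\|_{L^q(\Gamma;Z)}\le C(M,q,\mu)\,\|w\|_{L^2(\Gamma;Z)}$ for $q\ge2$, obtained from $\|w(y)\|_Z\le\sum_k\|w_k\|_Z|p_k(y)|$, the elementary bound $(\sum_k t_k)^q\le(M+1)^{q-1}\sum_k t_k^q$, the finiteness of $\|p_k\|_{L^q(\Gamma;\R)}$, the embedding $\ell^2\hookrightarrow\ell^q$, and orthonormality ($\|w\|_{L^2(\Gamma;Z)}^2=\sum_k\|w_k\|_Z^2$). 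Combining the two, via Cauchy--Schwarz in $x$ and Hölder in $\Gamma$ with exponents $(3,3,3)$ and the inverse estimate with $Z=L^2(\DD;\R)^d$,
\begin{align*}
|\twB(\tu,\tv)|\le\int_\Gamma g(y)\,\|\tu(y,\cdot)\|_{H_0^1(\DD;\R)}\,\|\tv(y,\cdot)\|_{H_0^1(\DD;\R)}\,d\mu(y)\le C^2\,\|g\|_{L^3(\Gamma;\R)}\,\|\tu\|_{\tLH}\,\|\tv\|_{\tLH}.
\end{align*}
Boundedness of $\twF$ is entirely parallel: $(\bar a_{\min}^{\,-1})^{(M)}\in L^q(\Gamma;\R)$ for every $q<\infty$ since $a_{\min}^{-1}\in L^p(\Omega;\R)$ for all $p$, and the same $(3,3,3)$-Hölder together with the inverse estimate (with $Z=H^{-1}(\DD;\R)$ and $Z=H_0^1(\DD;\R)$) yields $|\twF(\tv)|\le C\,\|\tf\|_{L^{2;(M)}(\Gamma;H^{-1}(\DD;\R))}\,\|\tv\|_{\tLH}$.

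Coercivity is the easy property: under the standing assumption that the truncated diffusion coefficient satisfies Assumption~\ref{Assumption:Requirements_ExUnique_WeakSol_REPDE}, $\bar{\mathbf{a}}^{(M)}$ inherits a positive lower bound $\bar{\mathbf{a}}^{(M)}(y,x)\ge\alpha>0$ for $\mu$-a.e.\ $y$ and $\lambda^d$-a.e.\ $x$, so $\twB(\tu,\tu)=\int_\Gamma\int_\DD\bar{\mathbf{a}}^{(M)}(y,x)\|\nabla\tu(y,x)\|_2^2\,d\lambda^d(x)\,d\mu(y)\ge\alpha\|\tu\|_{\tLH}^2$. With bilinearity, symmetry, boundedness and coercivity of $\twB$ and boundedness of $\twF$, the Lax--Milgram lemma yields a unique $u^*\in\tLH$ with $\twB(u^*,\tv)=\twF(\tv)$ for all $\tv\in\tLH$, i.e.\ the unique solution of Problem~\ref{Problem:WeightedWeakSGA}. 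For the energy statement, for every $w\in\tLH$ I expand, using symmetry of $\twB$ and $\twB(u^*,w)=\twF(w)$,
\begin{align*}
\twE(u^*+w)=\twE(u^*)+\twB(u^*,w)-\twF(w)+\frac12\twB(w,w)=\twE(u^*)+\frac12\twB(w,w)\ge\twE(u^*)+\frac{\alpha}{2}\|w\|_{\tLH}^2,
\end{align*}
with equality only for $w=0$; hence $u^*=\operatorname{arg\,min}_{\tu\in\tLH}\twE(\tu)$ is the unique minimiser, which is the assertion. I expect the only real obstacle to be the boundedness estimate for $\twB$ --- equivalently, that $\twB$ is even real-valued on $\tLH$, which does fail without integrability of $a_{\max}/a_{\min}$ --- while coercivity and the Ritz step are a one-line use of the positivity of the coefficient and of standard Hilbert-space arguments.
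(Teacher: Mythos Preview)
Your proposal is correct and takes a genuinely different route from the paper. Where you verify the Lax--Milgram hypotheses directly on $\tLH$ with its unweighted norm --- using the inverse inequality $\|w\|_{L^q(\Gamma;Z)}\le C(M,q)\|w\|_{L^2(\Gamma;Z)}$ on the finite-dimensional stochastic factor as the key device for boundedness of $\twB$ and $\twF$ --- the paper instead establishes the embeddings $\tLH\subset L^2_{a/a_{\min}}(\Gamma;H_0^1(\DD;\R))$ and $L^{2;(M)}(\Gamma;H^{-1}(\DD;\R))\subset L^2_{a_{\min}^{-2}}(\Gamma;H^{-1}(\DD;\R))$ (via the same H\"older/finite-moment argument you use for $\|\mathbf{a}_k\|_{L^\infty}$), invokes Lax--Milgram on the weighted space where the bilinear form is essentially the inner product, and then obtains $u^*\in\tLH$ as a Galerkin projection by C\'ea's lemma. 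For the energy statement, your completing-the-square identity $\twE(u^*+w)=\twE(u^*)+\tfrac12\twB(w,w)$ is the clean one-liner; the paper runs the direct method instead (bounded below, minimising sequence, parallelogram law to prove it is Cauchy, then the first-variation argument $\phi'(0)=0$ to identify the limit with the variational solution). Your route is more self-contained and avoids the weighted-space machinery; the paper's route makes quasi-optimality explicit and ties the discrete problem back to Problem~\ref{Problem:WeightedWeakStochasticFormulation}. One caveat worth flagging: your coercivity step asserts a \emph{deterministic} lower bound $\bar{\mathbf{a}}^{(M)}\ge\alpha>0$, which is stronger than what Assumption~\ref{Assumption:Requirements_ExUnique_WeakSol_REPDE} literally provides (random bounds); the paper's proof makes the very same implicit leap when it later invokes ``coercivity of $\twB$ with constant $C_1>0$'' on $\tLH$, so you are in good company, but the step is not innocent.
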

\begin{proof}
We first show $L^{2; (M)}(\Gamma; H^{-1}(\DD; \R)) \subset L^2_{a_{\min}^{-2}}(\Gamma; H^{-1}(\DD; \R))$. Let $\tf \in L^{2; (M)}(\Gamma; H^{-1}(\DD; \R))$, then $\tf (y,x) = \sum_{k=0}^M f_k(x) p_k(y)$ with $f_k \in H^{-1}(\DD; \R)$ for $k = 0, \dots , M$. It holds
\begin{align*}
\lVert \tf \rVert_{L^2_{a_{\min}^{-2}}(\Gamma; H^{-1}(\DD; \R))}^2 
	&=
\int_\Gamma \big\lVert \sum_{k=0}^M f_k(\cdot) p_k(y) \big\lVert_{H^{-1}(\DD; \R)}^2 a_{\min}^{-2}(y) \, d\mu(y)
	\\
	&\leq
\int_\Gamma \Big( \sum_{i=0}^M \lVert f_i \rVert_{H^{-1}(\DD; \R)}^2 \Big) \Big(\sum_{j = 0}^M p_j^2(y) \Big)\, a_{\min}^{-2}(y) \, d \mu(y) 
	\\
	&=
\sum_{i,j=0}^M \lVert f_i \rVert_{H^{-1}(\DD; \R)}^2 \int_\Gamma p_j^2(y) a_{\min}^{-2}(y) \, d \mu(y)
	\\
	& \leq
\sum_{i,j=0}^M \lVert f_i \rVert_{H^{-1}(\DD; \R)}^2 
\bigg( \int_\Gamma \lvert p_j(y) \rvert^{2r} \, d\mu(y) \bigg)^{1/r}
\bigg( \int_\Gamma \lvert a_{\min}^{-1}(y) \rvert^{2s} \, d\mu(y) \bigg)^{1/s} < \infty,
\end{align*}
for any $r,s > 1$ with $\frac{1}{r} + \frac{1}{s} = 1$, due to Assumption \ref{Assumption:Requirements_ExUnique_WeakSol_REPDE} and since $Y: \Omega \to \Gamma$ has finite moments of all orders. 
Analogously to Problem~\ref{Problem:WeightedWeakStochasticFormulation}, we have the existence and uniqueness of a solution $\tilde{u} \in L^2_{a/a_{\min}}(\Gamma; H_0^1(\DD; \R))$ of the truncated weighted weak formulation
\begin{align*}
\int_\Gamma \int_\DD \bar{\mathbf{a}}^{(M)}(y,x) \nabla \tilde{u}(y,x) \cdot \nabla v(y,x) \, d\lambda^d(x) \, d\mu(y) 
=
\int_\Gamma \int_\DD (\bar{a}_{\min}^{\, -1})^{(M)}(y) \tf (y, x) v(y, x)\, d\lambda^d(x) \, d\mu(y)
\end{align*}
for every $v \in L^2_{a/a_{\min}}(\Gamma; H_0^1(\DD; \R))$.
We now show $\tLH \subset L^2_{a/a_{\min}}(\Gamma; H_0^1(\DD; \R))$. 
To this end, let $\tu (y, x) = \sum_{k = 0}^M u_k(x) p_k(y) \in \tLH$, then we have
\begin{align*}
\lVert \tu \rVert_{L^2_{a/a_{\min}}(\Gamma; H_0^1(\DD; \R))}^2 
	&=
\int_\Gamma \int_\DD \frac{a(y, x)}{a_{\min}(y)} \lVert \nabla \tu(y, x) \rVert_2^2 \, d\lambda^d(x) \, d\mu(y)
	\\
	& \leq
\sum_{i,j=0}^M \int_\DD \lVert \nabla u_i(x) \rVert_2^2 \, d\lambda^d(x)
\int_\Gamma \frac{a_{\max}}{a_{\min}}(y) p_j^2(y) \, d \mu(y)
	\\
	& \leq
\sum_{i,j=0}^M \lVert u_i \rVert_{H_0^1(\DD;\R)}^2 
\bigg( \int_\Gamma \Big\lvert \frac{a_{\max}}{a_{\min}}(y) \Big\rvert^r \, d \mu(y) \bigg)^{1/r}
\bigg( \int_\Gamma \lvert p_j(y) \rvert^{2s} \, d \mu(y) \bigg)^{1/s}
	< \infty,
\end{align*}
for some $r > 1$, where $r,s > 1$ denote conjugated Hölder exponents, and since $Y: \Omega \to \Gamma$ has finite moments of all orders.
Therefore, we get the existence of a unique solution $u^* \in \tLH$ of Problem~\ref{Problem:WeightedWeakSGA}, which is quasi optimal by Céa's Lemma (\citep[Thm. 8.21]{Hackbusch2017}).
For the second statement, we show that the Ritz energy functional is bounded from below. By using the coercivity of the bilinear form $\twB$ with constant $C_1 > 0$, and the continuity of $\twF$ with constant $C_2 > 0 $, we get
\begin{align*}
\twE (\tu) 
	=
\frac{1}{2} \twB (\tu, \tu) - \twF (\tu) 
	\geq
\frac{1}{2} C_1 \lVert \tu \rVert_{\tLH}^2 - C_2 \lVert \tu \rVert_{\tLH}
	\geq
- \frac{C_2^2}{2C_1}.
\end{align*}
Since the Ritz energy functional is bounded from below, there is a minimal sequence $(u_n)_{n \in \N} \subset \tLH$, such that
\begin{align*}
\lim_{n \to \infty} \twE (u_n) = \mathcal{E}_- := \inf_{\tu \in \tLH} \twE (\tu).
\end{align*}
Using the parallelogram law in Hilbert spaces, we get for $m, n \in \N$:
\begin{align*}
\frac{C_1}{4} \lVert u_n - u_m \rVert_{\tLH}^2
	&=
\frac{C_1}{2} \lVert u_n \rVert_{\tLH}^2 + \frac{C_1}{2} \lVert u_m \rVert_{\tLH}^2 - \frac{C_1}{4} \lVert u_n + u_m \rVert_{\tLH}^2
	\\
	&=
\frac{C_1}{2} \lVert u_n \rVert_{\tLH}^2 + \frac{C_1}{2} \lVert u_m \rVert_{\tLH}^2 - C_1 \left\lVert \frac{u_n + u_n}{2} \right\rVert_{\tLH}^2
	\\
	& \qquad
		- \twF (u_n) - \twF (u_m) + 2 \twF \left( \frac{u_n + u_m}{2} \right) 
	\\
	& \leq
\frac{1}{2} \twB (u_n, u_n) - \twF (u_n) + \frac{1}{2} \twB (u_m, u_m) - \twF (u_m)
	\\
	& \qquad
 		- \twB \left( \frac{u_n + u_m}{2}, \frac{u_n + u_m}{2} \right) + 2 \twF \left( \frac{u_n + u_m}{2} \right)
 	\\
 	& =
\twE (u_n) + \twE (u_m) - 2 \twE  \left( \frac{u_n + u_m}{2} \right) 
	\\
	& \leq
\twE (u_n) + \twE (u_m)  -2 \mathcal{E}_- \to 0 \text{ for } m,n \to \infty.
\end{align*}
Therefore, $(u_n)_{n \in \N}$ is a Cauchy sequence in $\tLH$, and hence has a limit $u^* \in \tLH$. 
Due to the continuity of $\twE$, we get $\twE (u^*) = \mathcal{E}_-$, i.e.\ the infimum is attained, and thus a minimum. 
To show that the minimizer $u^*$ is the solution of Problem \ref{Problem:WeightedWeakSGA}, we define the scalar function $\phi: \R \to \R$, for any $\tv \in \tLH$, via
\begin{align*}
\phi(\lambda) 
	& :=
\twE (u^* + \lambda \tv) = \frac{1}{2} \twB (u^* + \lambda \tv, u^* + \lambda \tv) - \twF (u^* + \lambda \tv) \\
	& = 
\frac{1}{2} \twB (u^*, u^*) + \lambda \twB (u^*, \tv) + \frac{1}{2} \lambda^2 \twB (\tv, \tv) - \twF (u^*) - \lambda \twF (\tv).
\end{align*}
If $\twE$ attains its minimum at $u^*$, then $\phi$ has a minimum at $\lambda = 0$ for every $\tv \in \tLH$. 
Since $\phi$ is differentiable in $\lambda \in \R$ and parabolic with non-negative leading coefficient (due to the coercivity of $\twB$), we get the necessary condition $\phi'(0) = 0$ which is equivalent to
\begin{align*}
\twB (u^*, \tv) = \twF (\tv) \text{ for every } \tv \in \tLH.
\end{align*}
The uniqueness of the minimum of $\twE$ follows directly from the uniqueness of the solution to Problem \ref{Problem:WeightedWeakSGA}.
\end{proof}

\begin{remark} \label{Rmk:RitzEnergyClassicalSetup}
Note that Theorem \ref{Thm:RitzEnergyFunction} is also valid for the classical weak form stochastic Galerkin approximation, Problem \ref{Problem:ClassicalWeakSGA}, under the weaker assumption of $a_{\max} \in L^r(\Omega; \R)$ for some $r > 1$. The proof is analogue to the one given.
\end{remark}
If the weighted stochastic diffusion coefficient $\mathbf{a} \in L^2(\Omega; L^2(\DD; \R))$ is fully resolved by the PC expansion, e.g.\ if $\mathbf{a}$ is given by a KLE-type expansion and $P \geq 1$, there is no additional approximation error by considering the truncated operators $\twB$ and $\twF$.

\begin{corollary}
Let $\frac{a_{\max}}{a_{\min}} \in L^r(\Gamma; \R)$ for some $r > 1$, $\bar{\mathbf{a}}^{(M)} = \bar{\mathbf{a}}$ and let $\tilde{u} \in L^2_{a/a_{\min}}(\Gamma; H_0^1(\DD; \R))$ be the solution of the weighted stochastic weak formulation, Problem~\ref{Problem:WeightedWeakStochasticFormulation}, then the orthogonal projection $u^* \in \tLH$ of $\tilde{u}$ in $\tLH$ is the unique solution of the weighted weak stochastic Galerkin approximation, Problem~\ref{Problem:WeightedWeakSGA}.
\end{corollary}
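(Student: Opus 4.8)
The plan is to turn Problem~\ref{Problem:WeightedWeakSGA} into a genuine Hilbert-space projection problem by observing that, once $\bar{\mathbf{a}}^{(M)} = \bar{\mathbf{a}}$, the bilinear form $\twB$ does not merely enjoy the Lax--Milgram properties but actually \emph{equals}, on $\tLH \times \tLH$, the inner product of the ambient space $L^2_{a/a_{\min}}(\Gamma; H_0^1(\DD;\R))$: indeed $\twB(\tu,\tv) = \int_\Gamma \int_\DD \bar{\mathbf{a}}^{(M)}(y,x)\,\nabla \tu \cdot \nabla \tv \, d\lambda^d(x)\, d\mu(y) = \int_\Gamma \int_\DD (\bar a(y,x)/\bar a_{\min}(y))\,\nabla \tu \cdot \nabla \tv \, d\lambda^d(x)\, d\mu(y) = \langle \tu, \tv \rangle_{L^2_{a/a_{\min}}(\Gamma; H_0^1(\DD;\R))}$, so ``$\twB$-orthogonality'' is literally orthogonality in the ambient Hilbert space. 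From there the statement is the familiar fact that the Galerkin solution of a symmetric coercive problem is the energy-orthogonal projection of the exact solution onto the trial space.

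Concretely I would: (1) record that $\tLH$ is a \emph{closed} subspace of $L^2_{a/a_{\min}}(\Gamma; H_0^1(\DD;\R))$ --- the inclusion is exactly the one already proved inside Theorem~\ref{Thm:RitzEnergyFunction}, and on $\tLH$ the ambient weighted norm and the intrinsic norm $\lVert\cdot\rVert_{\tLH}$ are equivalent ($\lVert\cdot\rVert_{\tLH} \le \lVert\cdot\rVert_{L^2_{a/a_{\min}}(\Gamma; H_0^1(\DD;\R))}$ since $\bar a/\bar a_{\min}\ge 1$, and the reverse bound is the Hölder estimate with $\lVert a_{\max}/a_{\min}\rVert_{L^r(\Gamma;\R)}$ and the $2s$-th moments of the $p_k$ carried out in that proof), whence $\tLH$ inherits completeness and is closed; (2) identify $\twB|_{\tLH\times\tLH}$ with $\langle\cdot,\cdot\rangle_{L^2_{a/a_{\min}}(\Gamma; H_0^1(\DD;\R))}$ as above; (3) note that $\tilde u$, through its deterministic representation, solves the truncated weighted weak formulation on $\Gamma$ that appears in the proof of Theorem~\ref{Thm:RitzEnergyFunction}, whose right-hand side is precisely $\twF$, so restricting the test functions to $\tLH$ yields $\langle \tilde u, \tv\rangle_{L^2_{a/a_{\min}}(\Gamma; H_0^1(\DD;\R))} = \twF(\tv)$ for every $\tv\in\tLH$; and (4) let $u^* := \Pi_{\tLH}\tilde u$ be the orthogonal projection (which exists and is unique by step~(1) and the projection theorem), so that $\langle \tilde u - u^*, \tv\rangle_{L^2_{a/a_{\min}}(\Gamma; H_0^1(\DD;\R))} = 0$ for all $\tv\in\tLH$; combining this with (2) and (3) gives $\twB(u^*,\tv) = \langle u^*,\tv\rangle_{L^2_{a/a_{\min}}(\Gamma; H_0^1(\DD;\R))} = \langle\tilde u,\tv\rangle_{L^2_{a/a_{\min}}(\Gamma; H_0^1(\DD;\R))} = \twF(\tv)$ for every $\tv\in\tLH$, i.e.\ $u^*$ solves Problem~\ref{Problem:WeightedWeakSGA}, and uniqueness of this solution is already contained in Theorem~\ref{Thm:RitzEnergyFunction}.

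The delicate point --- and the only place where genuine care is needed --- is step~(3): one must be precise about which variational problem $\tilde u$ is compared against. The functional $\twF$ is built from the \emph{truncated} representations $(\bar a_{\min}^{-1})^{(M)}$ and $\bar f^{(M)}$, and since the product of two truncations is not the truncation of the product, $\twF$ does not in general agree with $\tv \mapsto \int_\Gamma\int_\DD \bar a_{\min}^{-1}(y)\,\bar f(y,x)\,\tv(y,x)\,d\lambda^d(x)\,d\mu(y)$ even after restriction to $\tLH$. The clean resolution is to take $\tilde u$ to be the solution of the truncated weighted weak formulation on $\Gamma$ already written down in the proof of Theorem~\ref{Thm:RitzEnergyFunction} (so that the two problems share the right-hand side $\twF$ verbatim) and to observe that, precisely when $\bar{\mathbf{a}}^{(M)} = \bar{\mathbf{a}}$, this problem differs from the genuine weighted stochastic weak formulation, Problem~\ref{Problem:WeightedWeakStochasticFormulation}, only through the forcing term --- which is the content of the remark preceding the corollary, and the only place where the hypothesis $\bar{\mathbf{a}}^{(M)} = \bar{\mathbf{a}}$ is really used. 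Everything else (closedness of $\tLH$, the energy-inner-product identification, Galerkin orthogonality, the projection theorem) is routine.
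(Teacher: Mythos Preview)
Your approach is essentially the same as the paper's: both use the inclusion $\tLH \subset L^2_{a/a_{\min}}(\Gamma; H_0^1(\DD;\R))$ from the proof of Theorem~\ref{Thm:RitzEnergyFunction}, identify $\twB$ with the ambient inner product under the hypothesis $\bar{\mathbf{a}}^{(M)}=\bar{\mathbf{a}}$, and read off the Galerkin equations from the defining property $\langle \tilde u - u^*,\tv\rangle_{L^2_{a/a_{\min}}}=0$ of the orthogonal projection. Your step~(1), arguing explicitly that $\tLH$ is closed in the weighted space via equivalence of norms, is a welcome addition that the paper omits.

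Where you and the paper diverge is exactly the point you flag as delicate. The paper does \emph{not} redefine $\tilde u$; it takes $\tilde u$ to be the solution of Problem~\ref{Problem:WeightedWeakStochasticFormulation} as stated, obtains $\wB(u^*,\tv)=\wF(\tv)$ for all $\tv\in\tLH$, and then simply asserts that ``by the orthonormality of $(p_k)_{k\in\N_0}$, we also get $\twF=\wF$ on $L^{2;(M)}(\Gamma;H^{-1}(\DD;\R))$.'' Your worry that the product of two truncations is not the truncation of the product is legitimate, and the paper's one-line justification does not address it; strictly speaking, the assumption $\bar{\mathbf{a}}^{(M)}=\bar{\mathbf{a}}$ constrains $\bar a/\bar a_{\min}$ but says nothing about $(\bar a_{\min}^{-1})^{(M)}$ or $\bar f^{(M)}$ individually. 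However, your proposed fix --- replacing $\tilde u$ by the solution of the \emph{truncated} weighted problem from the proof of Theorem~\ref{Thm:RitzEnergyFunction} --- changes the hypothesis of the corollary, so it proves a slightly different statement. If you want to stay faithful to the corollary as written, you should follow the paper's route and argue $\twF(\tv)=\wF(\tv)$ for $\tv\in\tLH$ directly; this holds cleanly whenever the weighted forcing $\bar{\mathbf f}=\bar a_{\min}^{-1}\bar f$ is itself fully resolved at degree $P$ (which is the natural companion hypothesis to $\bar{\mathbf a}^{(M)}=\bar{\mathbf a}$ and is what the remark preceding the corollary seems to have in mind), but in full generality requires more than orthonormality of the $p_k$.
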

\begin{proof}
As shown in the proof of Theorem~\ref{Thm:RitzEnergyFunction}, $\tLH \subset L^2_{a/a_{\min}}(\Gamma; H_0^1(\DD; \R))$, and $L^{2; (M)}(\Gamma; H^{-1}(\DD; \R)) \subset L^2_{a_{\min}^{-2}}(\Gamma; H^{-1}(\DD; \R))$. 
Now let $u^* \in \tLH$ denote the orthogonal projection of the solution $\tilde{u} \in L^2_{a/a_{\min}}(\Gamma; H_0^1(\DD; \R))$ in $\tLH$, i.e.\
\begin{align*}
\langle \tilde{u} - u^* \, , \, \tv \rangle_{L^2_{a/a_{\min}}(\Gamma; H_0^1(\DD; \R))} = 0 \text{ for all } \tv \in \tLH .
\end{align*}
By definition, we get for every $\tv \in \tLH$:
\begin{align*}
0 = \langle \tilde{u} - u^* \, , \, \tv \rangle_{L^2_{a/a_{\min}}(\Gamma; H_0^1(\DD; \R))}
	&=
\int_\Gamma \int_\DD \bar{\mathbf{a}}(y,x) \nabla (\tilde{u}(y, x) - u^*(y, x)) \cdot \nabla \tv(y, x) \, d\lambda^d(x) \, d \mu(y)
	\\
	&=
\wB (\tilde{u}, \tv ) - \wB (u^*, \tv)
	= \wF (\tv) - \wB (u^* , \tv).
\end{align*}
And hence $\wB ( u^*, \tv) = \wF(\tv)$ for every $\tv \in \tLH$. According to the assumption $\bar{\mathbf{a}}^{(M)} = \bar{\mathbf{a}}$ we get $\twB = \wB$ on $\tLH \times \tLH$, and by the orthonormality of $(p_k)_{k \in \N_0}$, we also get $\twF = \wF$ on $L^{2; (M)}(\Gamma; H^{-1}(\DD; \R))$, which concludes the proof.
\end{proof}
\subsection{Computational form of the stochastic Galerkin systems}
We assemble the systems of equations derived in the previous chapter for the given stochastic Galerkin approximations (SGA). Starting with the strong form stochastic Galerkin approximation, Problem \ref{Problem:StrongFormSGA}, we get for $k = 0, \ldots, M$:
\begin{align*}
\big\langle \mathcal{R}_{\bar{u}^{(M)}}(\cdot, x) \, , \, p_k \big\rangle_{L^2(\Gamma;\R)} 
	&= 
\Big\langle \nabla \cdot \Big( \big( \sum_{i=0}^M a_i(x) \, p_i \big) \, \nabla \big( \sum_{j = 0}^M u_j(x) \, p_j \big) \Big) + \sum_{n = 0}^M f_n(x) \, p_n \, , \, p_k \Big\rangle_{L^2(\Gamma;\R)}  
	\\
	& = 
\sum_{i,j = 0}^M \nabla \cdot (a_i(x) \nabla u_j(x)) \, \langle p_i p_j \, , \, p_k \rangle_{L^2(\Gamma; \R)} 
+ \sum_{n=0}^M f_n(x) \langle p_n \, , \, p_k \rangle_{L^2(\Gamma; \R)} 
	\\
	& = 
\sum_{i,j = 0}^M \big( (\nabla a_i(x) \cdot \nabla u_j(x) + a_i(x) \Delta u_j(x)) \, \langle p_i  p_j \, , \, p_k \rangle_{L^2(\Gamma; \R)} \big) + f_k(x) 
	&& 
x \in \DD, 
	\\
\big\langle \bar{u}^{(M)}(\cdot, x) \, , \, p_k \big\rangle_{L^2(\Gamma;\R)} 
	&= 
\Big\langle \sum_{i=0}^M u_i(x) \, p_i , \, \, p_k \Big\rangle_{L^2(\Gamma; \R)} = u_k(x)
	&& x \in \partial \DD.
\end{align*}
By defining the Galerkin tensor $G = (G_{ijk}) \in \R^{(M + 1) \times (M + 1) \times (M + 1)}$, the matrix operator $A: \DD \to \R^{(M + 1) \times (M + 1)}, \, A(x) = (A_{jk}(x))$, and the tensor operator $B: \DD \to \R^{(M + 1) \times (M + 1)
 \times d}, \, B(x) = (B_{jk}(x))$ via
\begin{align*}
G_{ijk} := \langle p_i p_j \, , \, p_k \rangle_{L^2(\Gamma;\R)} \in \R,
	\quad
A_{jk}(x) := \sum_{i = 0}^M a_i(x) \, G_{ijk} \in \R,
	\quad
B_{jk}(x) := \sum_{i = 0}^M \nabla a_i(x) \, G_{ijk} \in \R^d,
\end{align*}
the stochastic Galerkin system can be written in the following way.
\begin{problem}[Computational form of the strong form SGA] \label{Problem:ComputationalFormStrongSGA}
For the given operators defined above, find $u_0, \dots , u_M \in H_0^2(\DD; \R)$ such that
\begin{align*}
- \sum_{j = 0}^M \big( A_{jk}(x) \, \Delta u_j(x) + B_{jk}(x) \cdot \nabla u_j(x)\big) 
	&= 
f_k(x)
	&& \hspace*{-2.5cm}
 x \in \DD, \, k = 0, \dots, M,
 	\\
u_k(x) 
	&= 
0
	 && \hspace*{-2.5cm}
x \in \partial \DD, \, k = 0, \dots , M.
\end{align*}
\end{problem}
We continue with the weak form stochastic Galerkin approximation and obtain for $\tu, \tv \in \tLH$
\begin{align*}
\twB (\tu, \tv) 
	&=
\int_\Gamma \int_\DD \Big( \sum_{i = 0}^M \mathbf{a}_i(x) p_i(y) \Big) \, \nabla \Big( \sum_{j = 0}^M  u_j(x) p_j(y) \Big) \cdot \nabla \Big( \sum_{k = 0}^M v_k(x) p_k(y) \Big) \, d\lambda^d(x) \, d\mu (y)
	\\
	&=
\int_\DD \sum_{i,j,k = 0}^M G_{ijk} \mathbf{a}_i(x) \nabla u_j(x) \cdot \nabla v_k(x) \, d\lambda^d(x)
	\\
	&=
\int_\DD \sum_{j,k = 0}^M \mathbf{A}_{jk}(x) \nabla u_j(x) \cdot \nabla v_k(x) \, d\lambda^d(x),
\end{align*}
where the matrix operator $\mathbf{A} : \DD \to \R^{(M+1) \times (M+1)}$ is defined analogously by
\begin{align*}
\mathbf{A}_{jk}(x) := \sum_{i = 0}^M \mathbf{a}_i(x) G_{ijk} \in \R.
\end{align*}
Since $f \in L^2_{a_{\min}^{-2}}(\Omega; H^{-1}(\DD; \R))$, we also note that $\bar{\mathbf{f}}(y,x):= \frac{\bar{f}(y,x)}{\bar{a}_{\min}(y)} \in L^2(\Gamma; L^2(\DD; \R))$, which we analogously expand into the PC expansion to obtain the truncation
\begin{align*}
\bar{\mathbf{f}}^{(M)}(y,x) = \sum_{k = 0}^M \mathbf{f}_k(x) p_k(y).
\end{align*}
We get for $\tv \in \tLH$
\begin{align*}
\twF (\tv)
	&=
\int_{\Gamma} 
\int_\DD
\Big( \sum_{i = 0}^M \mathbf{f}_i(x) p_i(y)\Big) \Big( \sum_{j = 0}^M v_j(x) p_j(y) \Big) \, d\lambda^d(x) \, d\mu (y) 
	\\
	&=
\sum_{i,j = 0}^{M} \Big(
\int_\DD \mathbf{f}_i(x) v_j(x) \, d\lambda^d(x)
 \int_\Gamma p_i(y) p_j(y) \, d \mu (y) \Big)
	\\
	&=
\sum_{i = 0}^M \int_\DD  \mathbf{f}_i(x) v_i(x) \, d\lambda^d(x).
\end{align*}
With that, the stochastic Galerkin system can be written in the following way.
\begin{problem}[Computational form of the weighted weak form SGA] \label{Problem:ComputationalFormWeightedWeakSGA}
For the given operator defined above, find $u_0, \dots , u_M \in H_0^1(\DD; \R)$ such that for every $\tv = \sum_{i = 0}^M v_i(x) p_i(y) \in \tLH$:
\begin{align*}
\int_\DD \sum_{i,j = 0}^M \mathbf{A}_{ij}(x) \nabla u_i(x) \cdot \nabla v_j(x) \, d\lambda^d(x)
	&= 
\int_\DD \sum_{k = 0}^M \mathbf{f}_k(x) v_k(x) \, d\lambda^d(x).
\end{align*}
\end{problem}
\begin{remark} \label{Remark:EnergyMinimizationWeightedWeakSGA}
Combining Theorem \ref{Thm:RitzEnergyFunction} with the computational form, the solution $u^* \in \tLH$, of Problem \ref{Problem:ComputationalFormWeightedWeakSGA}, is given as
\begin{align*}
u^* = \underset{\tu \in \tLH}{\operatorname{arg \, min}} 
\int_\DD \frac{1}{2}  \sum_{i,j = 0}^M \mathbf{A}_{ij}(x) \nabla u_i(x) \cdot \nabla u_j(x) - \sum_{k=0}^M \mathbf{f}_k(x) u_k(x) \, d\lambda^d(x).
\end{align*}
\end{remark}
The computational forms are the basis for the training strategies of the neural networks, developed in the next section.

\section{Deep learning approach} \label{sec:DL}
We use neural networks as surrogate for the spectral coefficients of the stochastic Galerkin approximations defined in the previous sections. 
A theoretical justification for that is given by the so-called \textit{universal approximation property}, stating that feedforward neural networks (in different setups) can approximate any continuous function on a bounded domain up to arbitrary precision. Here we mention the approximation property for sigmoidal and more general activation functions, as used in this work (see e.g.\ \citep{Cybenko1989, Hornik1989, Mhaskar1993}).
\subsection{Definition and notation}
In this section we introduce the notation and basic setup of the neural networks used in this work. In this context we approximate the spectral coefficients $(u_0, \dots, u_M)^T: \overline{\DD} \to \R^{M+1}$ of the stochastic Galerkin approximation $\tu (y,x) = \sum_{i = 0}^M u_i(x) p_i(y)$ of Problem \ref{Problem:ComputationalFormStrongSGA} and \ref{Problem:ComputationalFormWeightedWeakSGA} respectively by deep feedforward neural networks. 

A deep feedforward neural network of depth $D \in \N$ defines a parameterized mapping $\mathcal{N}_{\theta} : \R^{d_0} \to \R^{d_{D+1}}$, with parameters $\theta \in \Theta$, that takes an input $x \in \R^{d_0}$, and consists of $D$ hidden layers $\mathcal{L}^{(1)}, \dots , \mathcal{L}^{(D)}$ of sizes $d_1, \dots , d_D \in \N$, and an output layer $\mathcal{L}^{(D+1)}$ of size $d_{D+1}$. 
The size of a layer determines the number of computational nodes, called neurons, it consists of. Each neuron computes an affine transformation of the output of the previous layer, respectively the input, which is then composed, called activated, with a usually non-linear function called activation function. Formally, the output $l^{(i)} \in \R^{d_i}$ of the $i$-th layer $\mathcal{L}^{(i)} : \R^{d_{i-1}} \to \R^{d_i}$, for $i = 1, \dots, D+1$, is given by
\begin{align*}
l^{(i)} = \mathcal{L}^{(i)}(l^{(i-1)}) := \sigma^{(i)} \left( W^{(i)} l^{(i-1)} + b^{(i)} \right),
\end{align*}
where $\sigma^{(i)}: \R \to \R$ is the activation function of the $i$-th layer, applied component-wise, $W^{(i)} \in \R^{d_i \times d_{i-1}}$ and $b^{(i)} \in \R^{d_i}$ denote the weight matrix and the bias of the $i$-th layer, and $l^{(i-1)} \in \R^{d_{i-1}}$ the output of the $(i-1)$-th layer, respectively the input $x \in \R^{d_0}$ to the neural network for $i - 1 = 0$. The $j$-th component of $l^{(i)} = (l_1^{(i)} , \dots , l_{d_i}^{(i)})$ is the output of the $j$-th neuron in the $i$-th layer.
The deep feedforward neural network is defined as the composition of the layers
\begin{align*}
\mathcal{N}_{\theta} : \R^{d_0} \to \R^{d_{D+1}}, \, x \mapsto \left( \mathcal{L}^{(D+1)} \circ \mathcal{L}^{(D)} \circ \dots \circ \mathcal{L}^{(1)} \right)(x),
\end{align*}
where the parameters, called weights, are given by
\begin{align*}
\theta = \left( W^{(1)}, b^{(1)}, \dots , W^{(D+1)}, b^{(D+1)} \right) \in \Theta \subset \R^{d_1 \times d_0} \times \R^{d_1} \times \dots \times \R^{d_{D+1} \times d_D} \times \R^{d_{D+1}} := \mathcal{R}.
\end{align*}
The parameter space $\Theta$ can be a proper subset of $\mathcal{R}$, placing additional constraints on the weights, e.g.\ non-negativity or special weight matrices resulting in disconnected neurons. If smooth activation functions are used, the neural network is itself a smooth function.
Neural networks have a very high expressive power and achieve an impressive performance in a wide range of tasks. The task is usually taught to the neural network by a specified loss function
penalizing wrong predictions in a proper way. 
The algorithm then seeks to find a set of weights that minimizes some norm or transformation of the loss function, called risk. In supervised learning, the optimization is carried out with the help of a dataset containing ground-truth data, whereas in unsupervised learning no such dataset is used, the loss is calculated directly via a transformation of the input, without the need of ground truth labels. This can be advantageous, when training labels are very expensive or not available. Combinations of both training methods are also very common.

\subsection{Neural network architecture and loss function}
In this work we introduce two neural networks, the \textit{S-GalerkinNet}, approximating a solution of the strong form stochastic Galerkin method, Problem \ref{Problem:StrongFormSGA}, and the \textit{S-RitzNet}, approximating a solution of the weighted weak form stochastic Galerkin method, Problem \ref{Problem:WeightedWeakSGA}. 
The training strategies, namely the corresponding loss functions, are derived from the computational forms, Problem \ref{Problem:ComputationalFormStrongSGA}, and Problem \ref{Problem:ComputationalFormWeightedWeakSGA} using the minimization described in Remark \ref{Remark:EnergyMinimizationWeightedWeakSGA}. Furthermore, a schematic overview of the architectures of the neural networks is given. 

Starting with the \textit{S-GalerkinNet}, the goal is to  approximate the spectral coefficients $u_0, \dots , u_M \in H_0^2(\DD; \R)$ of the stochastic Galerkin approximation $\tu (y,x) = \sum_{i = 0}^M u_i(x) p_i(y) \in L^{2; (M)}(\Gamma; H_0^2(\DD; \R))$, solving Problem \ref{Problem:StrongFormSGA}, by the neural network.
In fact, we don't approximate the spectral coefficients $(u_0, \dots , u_M)^T: \overline{\DD} \to \R^{M+1}$ directly by the \textit{S-GalerkinNet} $\mathcal{N}_\theta^{\operatorname{SG}} : \overline{\DD} \to \R^{M+1}$, but we incorporate the homogeneous Dirichlet boundary conditions as a hard constraint. For that, we use an enforcer function $e \in C^2(\overline{\DD}; \R)$, such that $e(x) = 0$ for $x \in \partial \DD$, and $e(x) > 0$ for $x \in \DD$.
With that, we define the function
\begin{align*}
{\scriptstyle \mathcal{U}}_{\theta^*}^{\operatorname{SG}} = ({\scriptstyle \mathcal{U}}_{0;\theta^*}^{\operatorname{SG}}, \dots , {\scriptstyle \mathcal{U}}_{M;\theta^*}^{\operatorname{SG}})^T: \overline{\DD} \to \R^{M+1}, \, x \mapsto e(x) \cdot \mathcal{N}_{\theta^*}^{\operatorname{SG}}(x),
\end{align*}
and use the components as approximation of the spectral coefficients of the strong form stochastic Galerkin approximation $\tu \in L^{2;(M)}(\Gamma; H_0^2(\DD; \R))$, i.e.\
\begin{align*}
\tu (y,x) \approx \sum_{i = 0}^M {\scriptstyle \mathcal{U}}_{i;\theta^*}^{\operatorname{SG}}(x) p_i(y) =: \tu_{\theta^*}(y,x).
\end{align*}
Note that enforcing the boundary conditions as a hard constraint is not limited to homogeneous Dirichlet boundary conditions or rectangular domains (see e.g.\ \citep{Berrone2022, Sukumar2021}). 
To derive the optimization scheme to find the optimal parameters $\theta^* \in \Theta^{\operatorname{SG}}$, for the above defined approximation, we recall the goal to find ${\scriptstyle \mathcal{U}}_{\theta^*}^{\operatorname{SG}}$ such that for every $k = 0 , \dots , M$ and $x \in \DD$
\begin{align*}
\big\langle \mathcal{R}_{\tu_{\theta^*}}(\cdot, x) , p_k \big\rangle_{L^2(\Gamma; \R)} \overset{!}{=} 0. 
\end{align*}
In order to define a proper loss function to achieve this goal, we note the following:
\begin{theorem}
For smooth activation functions and every $\theta \in \Theta^{\operatorname{SG}}$ and $k = 0, \dots , M$, the mapping
\begin{align*}
x \mapsto \big\langle \mathcal{R}_{\tu_{\theta}}(\cdot, x) , p_k \big\rangle_{L^2(\Gamma; \R)}
\end{align*}
is a member of $L^2(\DD; \R)$.
\end{theorem}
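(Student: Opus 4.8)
The plan is to reduce the claim to the explicit computational form of the projected residual worked out in Section~\ref{Chapter:SpectralExpansion}. Writing $\tu_\theta(y,x) = \sum_{i=0}^M {\scriptstyle \mathcal{U}}_{i;\theta}^{\operatorname{SG}}(x)\, p_i(y)$ and expanding $\bar a^{(M)}$ in the polynomial chaos basis, the same manipulation that produced the computational form, Problem~\ref{Problem:ComputationalFormStrongSGA}, gives, for $\lambda^d$-almost every $x \in \DD$,
\begin{align*}
\big\langle \mathcal{R}_{\tu_\theta}(\cdot, x), p_k \big\rangle_{L^2(\Gamma;\R)}
= \sum_{j=0}^M \Big( A_{jk}(x)\, \Delta {\scriptstyle \mathcal{U}}_{j;\theta}^{\operatorname{SG}}(x) + B_{jk}(x)\cdot \nabla {\scriptstyle \mathcal{U}}_{j;\theta}^{\operatorname{SG}}(x) \Big) + f_k(x),
\end{align*}
with $A_{jk} = \sum_{i=0}^M a_i\, G_{ijk}$ and $B_{jk} = \sum_{i=0}^M \nabla a_i\, G_{ijk}$. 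It then suffices to show that each term on the right lies in $L^2(\DD;\R)$. First I would record that the Galerkin tensor entries $G_{ijk} = \langle p_i p_j, p_k\rangle_{L^2(\Gamma;\R)}$ are finite real numbers, since $p_i p_j p_k$ is a polynomial and $Y$ has finite moments of all orders by Assumption~\ref{Assumption:RVsReq_gPC_MomentsAndDeterminateMeasure}; this also justifies interchanging the differential operator with the finite chaos sums and with the $\Gamma$-integration defining $\langle\,\cdot\,,p_k\rangle_{L^2(\Gamma;\R)}$, so that the displayed identity holds rigorously.

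The key step is the regularity of the surrogate coefficients. Because the activation functions are smooth, $\mathcal{N}_\theta^{\operatorname{SG}}$ is a $C^\infty$ map, and since the enforcer satisfies $e \in C^2(\overline{\DD};\R)$, the spatial coefficients ${\scriptstyle \mathcal{U}}_{j;\theta}^{\operatorname{SG}} = e \cdot \mathcal{N}_{j;\theta}^{\operatorname{SG}}$ of $\tu_\theta$ lie in $C^2(\overline{\DD};\R)$. As $\overline{\DD}$ is compact, each such function together with all of its partial derivatives up to order two is continuous on a compact set, hence bounded; in particular $\nabla {\scriptstyle \mathcal{U}}_{j;\theta}^{\operatorname{SG}} \in L^\infty(\DD;\R^d)$ and $\Delta {\scriptstyle \mathcal{U}}_{j;\theta}^{\operatorname{SG}} \in L^\infty(\DD;\R)$. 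On the data side, Assumption~\ref{Assumption:additional_assm_strong_residual} yields $a_i \in H^1(\DD;\R)$ for every $i$ — so $a_i, \nabla a_i \in L^2(\DD;\R)$ and therefore $A_{jk} \in L^2(\DD;\R)$, $B_{jk} \in L^2(\DD;\R^d)$ — and $f_k \in L^2(\DD;\R)$.

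Putting these together, each of the products $A_{jk}\,\Delta {\scriptstyle \mathcal{U}}_{j;\theta}^{\operatorname{SG}}$ and $B_{jk}\cdot \nabla {\scriptstyle \mathcal{U}}_{j;\theta}^{\operatorname{SG}}$ is an $L^2(\DD;\R)$ function multiplied by a bounded function, hence again in $L^2(\DD;\R)$; adding $f_k \in L^2(\DD;\R)$ and summing over the finitely many $j = 0,\dots,M$ preserves membership in $L^2(\DD;\R)$, which is the assertion.

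I expect the only genuinely delicate point to be the following. For $d \ge 2$, $H^1(\DD;\R)$ does not embed into $L^\infty(\DD;\R)$, so one cannot conclude that, say, $a_i\,\Delta {\scriptstyle \mathcal{U}}_{j;\theta}^{\operatorname{SG}}$ is in $L^2$ by bounding $a_i$. The resolution — and the reason smoothness of the activation is hypothesised — is that the required uniform bound must sit on the neural-network factor: the surrogate coefficients are $C^2$ up to the boundary of the bounded domain, so their derivatives of order at most two are uniformly bounded. With that in hand the remainder is routine bookkeeping of a finite sum of $L^2$-functions.
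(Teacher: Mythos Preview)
Your proposal is correct and follows essentially the same route as the paper: expand the projected residual in the computational form, use smoothness of the activation together with $e\in C^2(\overline{\DD};\R)$ and compactness of $\overline{\DD}$ to get $L^\infty$ bounds on $\nabla {\scriptstyle \mathcal{U}}_{j;\theta}^{\operatorname{SG}}$ and $\Delta {\scriptstyle \mathcal{U}}_{j;\theta}^{\operatorname{SG}}$, and then combine these with $a_i\in H^1(\DD;\R)$ and $f_k\in L^2(\DD;\R)$ from Assumption~\ref{Assumption:additional_assm_strong_residual} so that each summand is an $L^2$ function times a bounded factor. The paper carries this out via an explicit chain of inequalities culminating in a constant $C_j:=\max\big\{\max_{x}\lVert\nabla {\scriptstyle \mathcal{U}}_{j;\theta}^{\operatorname{SG}}(x)\rVert_2^2,\ \max_{x}\lvert\Delta {\scriptstyle \mathcal{U}}_{j;\theta}^{\operatorname{SG}}(x)\rvert^2\big\}$, but the underlying idea is identical to yours.
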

\begin{proof}
We get for every $\theta \in \Theta^{\operatorname{SG}}$ and $k = 0, \dots , M$
\begin{align*}
& \int_\DD \big\lvert
 \big\langle \mathcal{R}_{\tu_{\theta}}(\cdot, x) , p_k \big\rangle_{L^2(\Gamma; \R)} 
 \big\rvert^2 \, d\lambda^d(x)
 	\\
 	& \hspace*{0.5cm}
 	=
\int_\DD \Big\lvert 
\sum_{i,j = 0}^M \big(
\nabla a_i(x) \cdot \nabla {\scriptstyle \mathcal{U}}_{j;\theta}^{\operatorname{SG}}(x) + a_i(x) \Delta {\scriptstyle \mathcal{U}}_{j;\theta}^{\operatorname{SG}}(x)
\big) \, \langle p_i  p_j \, , \, p_k \rangle_{L^2(\Gamma; \R)} 
+ f_k(x) 
\big\rvert^2 \, d\lambda^d(x)
	\\
	& \hspace*{0.5cm}
	\leq
2(M+1)^2 \sum_{i,j=0}^M 
\int_\DD
\big\lvert
\big( 
 	\nabla a_i(x) \cdot \nabla {\scriptstyle \mathcal{U}}_{j;\theta}^{\operatorname{SG}}(x) + a_i(x) \Delta {\scriptstyle \mathcal{U}}_{j;\theta}^{\operatorname{SG}}(x)
 \big)
\langle p_i p_j \, , \, p_k \rangle_{L^2(\Gamma; \R)}
\big\rvert^2 \, d\lambda^d(x)
+ 2 \int_\DD \lvert f_k \rvert^2 \, d\lambda^d(x)
	\\
	& \hspace*{0.5cm}
	\leq
4(M+1)^2 \sum_{i,j=0}^M \lVert p_i p_j \rVert_{L^2(\Gamma; \R)}^2
\Big(
\int_\DD 
\big\lVert \nabla a_i(x) \big\rVert_2^2 \, \big\lVert  \nabla {\scriptstyle \mathcal{U}}_{j;\theta}^{\operatorname{SG}}(x) \big\rVert_2^2 
+
\big\lvert
a_i(x) \Delta {\scriptstyle \mathcal{U}}_{j;\theta}^{\operatorname{SG}}(x)
 \big\rvert^2
\, d\lambda^d(x)
\Big)
+ 2 \lVert f_k \rVert_{L^2(\DD, \R)}^2
	\\
	& \hspace*{0.5cm}
	\leq
4(M+1)^2 \sum_{i,j=0}^M 
C_j \lVert p_i p_j \rVert_{L^2(\Gamma; \R)}^2
\, \lVert a_i \rVert_{H^1(\DD; \R)}^2 
+ 2 \lVert f_k \rVert_{L^2(\DD, \R)}^2
< \infty,
\end{align*}
where
\begin{align*}
C_j := \max
\Big\{
\max_{x \in \overline{\DD}} \lVert  \nabla {\scriptstyle \mathcal{U}}_{j;\theta}^{\operatorname{SG}}(x) \rVert_2^2 
	\, , \,
\max_{x \in \overline{\DD}} \lvert \Delta {\scriptstyle \mathcal{U}}_{j;\theta}^{\operatorname{SG}}(x) \rvert^2
\Big\},
\end{align*}
which is finite due to Assumption \ref{Assumption:additional_assm_strong_residual}, and since the neural network is a smooth function on the compact domain $\overline{\DD}$, and hence we get ${\scriptstyle \mathcal{U}}_{\theta}^{\operatorname{SG}} = e \cdot \mathcal{N}_\theta^{\operatorname{SG}} \in C^2(\overline{\DD} ; \R^{M+1})$.
\end{proof}

With that, we can define an unsupervised loss function of least-squares-type to train the neural network, namely
\begin{align*}
\mathbf{L}^{\operatorname{SG}}(x; \mathcal{N}_\theta^{\operatorname{SG}}) := \frac{1}{M + 1} \sum_{k=0}^M \big\langle \mathcal{R}_{\tu_{\theta}}(\cdot, x) , p_k \big\rangle_{L^2(\Gamma; \R)}^2.
\end{align*}
Furthermore, we work with the computational form of the strong form stochastic Galerkin approximation, Problem \ref{Problem:ComputationalFormStrongSGA}. The associated risk is the corresponding empirical $L^2(\DD;\R)$ norm of the above defined loss, i.e. the training procedure of the \textit{S-GalerkinNet} is given as follows:
\begin{problem}[Training of the \textit{S-GalerkinNet}] \label{Problem:Training_SGN}
For given training points $(x_1, \dots , x_n) \in \DD^n$, find a set of parameters $\theta^* \in \Theta^{\operatorname{SG}}$, such that
\begin{align*}
\lambda^d(\DD) \, \frac{1}{n} \sum_{l = 1}^n \mathbf{L}^{\operatorname{SG}}(x_l; \mathcal{N}_{\theta^*}^{\operatorname{SG}})
= \lambda^d(\DD) \, \frac{1}{n} \sum_{l = 1}^n \frac{1}{M+1} \sum_{k=0}^M 
\left( \sum_{j=0}^M \big(A_{jk}(x_l) \, \Delta {\scriptstyle \mathcal{U}}_{j;\theta^*}^{\operatorname{SG}}(x_l) + B_{jk}(x_l) \cdot \nabla {\scriptstyle \mathcal{U}}_{j;\theta^*}^{\operatorname{SG}}(x_l) \big) + f_k(x_l) \right)^2 \overset{!}{=} 0.
\end{align*}
\end{problem}
Working with the \textit{S-RitzNet} $\mathcal{N}_\theta^{\operatorname{SR}}: \overline{\DD} \to \R^{M+1}$, we also approximate the spectral coefficients $u_0 , \dots , u_M \in H_0^1(\DD;\R)$, of the stochastic Galerkin approximation $\tu \in \tLH$, by the neural network. In this approach, however, we work with the weighted weak form stochastic Galerkin approximation, more precisely with the energy minimization of the computational form given in Remark \ref{Remark:EnergyMinimizationWeightedWeakSGA}.
Since this defines a minimization of an integral operator over the spatial domain, choosing the integrand as loss function is quite natural for this task. Therefore, using the computational form, we define the loss function for the \textit{S-RitzNet} as follows:
\begin{align*}
\mathbf{L}^{\operatorname{SR}}(x; \mathcal{N}_\theta^{\operatorname{SR}}) := \frac{1}{2} \sum_{i,j = 0}^M \mathbf{A}_{ij}(x) \nabla {\scriptstyle \mathcal{U}}_{i ; \theta}^{\operatorname{SR}} (x) \cdot \nabla {\scriptstyle \mathcal{U}}_{j ; \theta}^{\operatorname{SR}} (x) - \sum_{k = 0}^M \mathbf{f}_k(x) {\scriptstyle \mathcal{U}}_{k ; \theta}^{\operatorname{SR}} (x),
\end{align*}
where 
\begin{align*}
{\scriptstyle \mathcal{U}}_{\theta}^{\operatorname{SR}} = ({\scriptstyle \mathcal{U}}_{0;\theta}^{\operatorname{SR}}, \dots , {\scriptstyle \mathcal{U}}_{M;\theta}^{\operatorname{SR}})^T: \overline{\DD} \to \R^{M+1}, \, x \mapsto e(x) \cdot \mathcal{N}_{\theta}^{\operatorname{SR}}(x),
\end{align*}
denotes the neural network with enforced homogeneous Dirichlet boundary conditions, as described above. The associated risk of the \textit{S-RitzNet} is then the Monte Carlo integration of the loss function, i.e.\ the training procedure is given as follows:
	\begin{figure}[t!]
	\begin{center}
	\includegraphics[width=0.9999\textwidth]{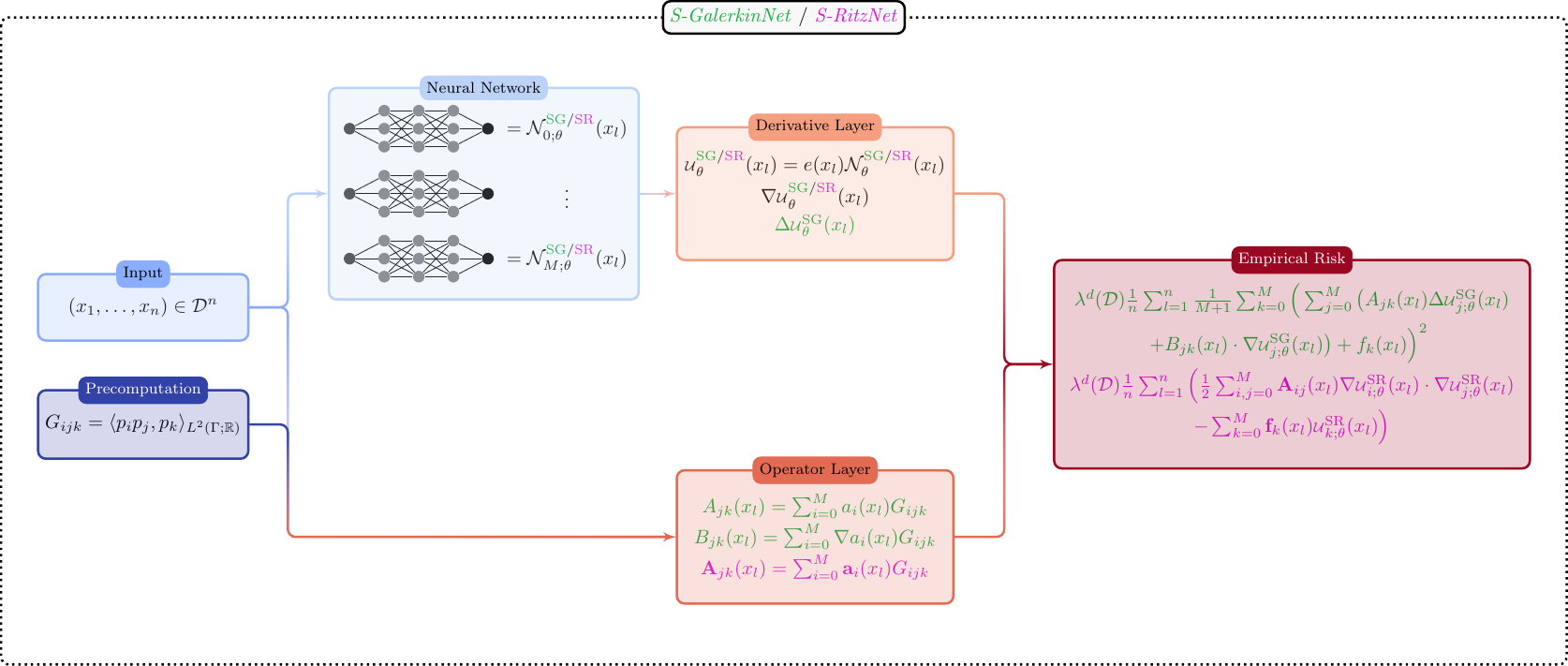}
	\end{center}
	\caption{Architecture of \textit{S-GalerkinNet} and \textit{S-RitzNet}.} \label{Fig:NN_Architecture_Combined}
	\end{figure}
\begin{problem}[Training of the \textit{S-RitzNet}] \label{Problem:TrainingSRN}
For given training points $(x_1, \dots , x_n) \in \DD^n$, find a set of parameters $\theta^* \in \Theta^{\operatorname{SR}}$, such that
\begin{align*}
&\lambda^d(\DD) \frac{1}{n} \sum_{l = 1}^n \mathbf{L}^{\operatorname{SR}}(x_l, \mathcal{N}_{\theta^*}^{\operatorname{SR}}) 
	=
\lambda^d(\DD) \frac{1}{n} \sum_{l = 1}^n 
	\Big( 
		\frac{1}{2} \sum_{i,j = 0}^M \mathbf{A}_{ij}(x_l) \nabla {\scriptstyle \mathcal{U}}_{i ; \theta^*}^{\operatorname{SR}} (x_l) \cdot \nabla {\scriptstyle \mathcal{U}}_{j ; \theta^*}^{\operatorname{SR}} (x_l) - \sum_{k = 0}^M \mathbf{f}_k(x_l) {\scriptstyle \mathcal{U}}_{k ; \theta^*}^{\operatorname{SR}} (x_l)
	\Big)
	\\
	& \qquad =
\min_{\theta \in \Theta^{\operatorname{SR}}} \lambda^d(\DD) \frac{1}{n} \sum_{l = 1}^n \mathbf{L}^{\operatorname{SR}}(x_l, \mathcal{N}_{\theta}^{\operatorname{SR}}).
\end{align*}
\end{problem}
Since the actual minimum is not known, the value of the risk is not as interpretable as for the strong residual. Therefore, every few epochs, we calculate a validation error to get a clearer quantification of the risk, and to formulate a stopping criterion for the neural network optimization. This validation error is the mean squared error of $10^4$ pathwise strong residuals.

A schematic overview of the architectures of the \textit{S-GalerkinNet} and the \textit{S-RitzNet} is given in Figure \ref{Fig:NN_Architecture_Combined}. The nodes written in black are the same for both architectures (with different weights), while the ones depicted in green only refer to the \textit{S-GalerkinNet}, and the ones depicted in magenta only refer to the \textit{S-RitzNet}. 
The input spatial points $(x_1, \dots , x_n) \in \DD^n$ are drawn in mini batches from a continuous stream of random or quasi-random samples, e.g.\ from a uniform distribution or as a Sobol sequence (see \citep{Sobol1967}).
The stochastic integrals, i.e.\ the Galerkin tensor $G \in \R^{(M+1) \times (M+1) \times (M+1)}$, are precomputed once in the offline phase.
The neural network consists of $(M+1)$ disconnected deep feedforward branches, where the $i$-th branch $\mathcal{N}_{i;\theta^*}^{\text{SG}}$ is used for the approximation of the $i$-th spectral coefficient $u_i$ of the stochastic Galerkin approximation $\tu$, for $i = 0 , \dots , M$. The specific details of the neural network, such as depth, width, activation functions, etc.\ are determined experimentally, and stated in the corresponding experiment section.
In the derivative layer, which is non-trainable, the boundary conditions are enforced, as described above, and the corresponding derivatives are calculated using automatic differentiation (see \citep{Baydin2017}). 
In the also non-trainable operator layer, the operators for the computational form are assembled, and the derivatives of the diffusion coefficient are either obtained analytically or via automatic differentiation. 
In the last forward step, the above derived empirical risk is calculated. 
In the not depicted backward step, this risk is minimized and the neural network weights are updated, called training. The training of the \textit{S-GalerkinNet} and \textit{S-RitzNet} is purely unsupervised, i.e.\ no labeled ground-truth data is used. The optimization is  based on a variant of stochastic gradient descent, called ADAM algorithm (see \citep{Kingma2014}), where gradients are calculated using the backpropagation algorithm. 
We refer to \citep[Ch. 6]{Goodfellow2016} for details on feedforward neural networks and backpropagation. 
The training procedure for the \textit{S-GalerkinNet} is stopped if the risk is below a threshold, or if it stopped decreasing for specified number of epochs. The training procedure for the \textit{S-RitzNet} is stopped if the risk and the validation error stopped decreasing for a specified number of epochs.

\section{Numerical Experiments}\label{sec:num}
In this section, we present numerical examples, where we solve the elliptic random PDE for different stochastic diffusion coefficients and forcing terms with the given deep learning approaches. The neural networks are implemented using the Python library Tensorflow (\citep{tensorflow2015-whitepaper}), and trained on a single \textit{NVIDIA RTX 3070} GPU.
The error $\epsilon_{M, \theta}$, stated in the experiments, is an approximation of the relative $L^2(\Omega; H^1(\DD; \R))$ distance of a reference solution $u$ (which is analytically available or obtained by other methods, stated in the experiments), and the neural network stochastic Galerkin approximation $u_{\theta}^{(M)} (\omega, x) = \sum_{i = 0}^M {\scriptstyle \mathcal{U}}_{i ; \theta}^{\operatorname{SG / SR}}(x) p_i(Y(\omega))$, given by the \textit{S-GalerkinNet} respectively the \textit{S-RitzNet}. The error is given by
\begin{align*}
\epsilon_{M, \theta} \approx \frac{\big\lVert u - u_{\theta}^{(M)} \big\rVert_{L^2(\Omega; H^1(\DD; \R))}}{
\lVert u \rVert_{L^2(\Omega; H^1(\DD; \R))}
} 
=
\frac{
\Big( \int_\Omega \int_\DD \big( u(\omega, x) - u_{\theta}^{(M)} (\omega, x) \big)^2 + \big\lVert \nabla u(\omega, x) - \nabla u_{\theta}^{(M)} (\omega, x) \big\rVert_2^2 \, d\lambda^d(x) \, dP(\omega) \Big)^{1/2}
}
{
\Big( \int_\Omega \int_\DD u^2 + \lVert \nabla u \rVert_2^2 \, d \lambda^d(x) \, dP(\omega) \Big)^{1/2}
},
\end{align*}
where the derivatives are calculated analytically, respectively via automatic differentiation, the spatial integrals are approximated using the trapezoid rule, and the stochastic integrals are approximated using Monte Carlo integration. 
Throughout the experiments, we see that both networks perform very similar in terms of approximation errors, while the \textit{S-RitzNet} trains substantially faster. This is because only first derivatives of \textit{S-RitzNet} are calculated during training, whereas second derivatives of the neural network are calculated during the training of the \textit{S-GalerkinNet}. The difference in training time grows with the dimensionality of the systems. 
Additionally, we measure the coupling strength of the systems as relative number of non-negative, non-diagonal entries of the coupling operators (as a mean over spatial points), $\operatorname{nnz}(A(x))$ and $\operatorname{nnz}(B(x))$ for the \textit{S-GalerkinNet}, and $\operatorname{nnz}(\mathbf{A}(x))$ for the \textit{S-RitzNet}.
The measured training times in the experiments all include the building and initialization of the tensorflow computational graph.

In a first example we consider the elliptic random PDE with constant diffusion coefficient, and a stochastic forcing term, that requires a very high polynomial degree in the PC expansion for a sufficient approximation. This results in a high dimensional system that can be accurately solved by the proposed neural network approaches.
To this end,
let $\DD = (0,1)$, and let $f(\omega, x) = \lvert \xi(\omega) - 1 \rvert$ denote the stochastic forcing term, where $\xi \sim \mathcal{N}(0,1)$. 
We use the probabilist's Hermite polynomials $(\operatorname{He}_k)_{k = 0 , \dots , M}$, evaluated at a standard normal random variable $Y \sim \mathcal{N}(0,1)$ (i.e.\ $N = 1$ and $M = P$), as polynomial chaos basis of $L^2(\R; \R)$ with respect to the standard normal distribution. The solution $u: \Omega \times [0,1] \to \R$ of the considered equation
\begin{align*}
- \frac{d^2}{dx^2} u(\omega, x) 
	&= 
	\lvert \xi(\omega) - 1 \rvert
	&& 
\hspace*{-3cm} x \in (0,1), \, \omega \in \Omega,	
	\\
u(\omega, 0) = u(\omega, 1) 
	&=
0
	&&
\hspace*{-3cm} \omega \in \Omega,
\end{align*}
is unique and analytically available, given by $u(\omega, x) = \frac{1}{2} \lvert \xi(\omega) - 1 \rvert (x - x^2)$. We choose the enforcer function $e(x):= x(1-x)$, and hence we have
\begin{align*}
{\scriptstyle \mathcal{U}}_{\theta}^{\operatorname{SG}}(x) = x(1-x) \mathcal{N}_\theta^{\operatorname {SG}}(x),
\quad
{\scriptstyle \mathcal{U}}_{\theta}^{\operatorname{SR}}(x) = x(1-x) \mathcal{N}_\theta^{\operatorname {SR}}(x).
\end{align*}
The spectral coefficients $(f_0, \dots , f_M)$ of the deterministic representation $\tf \in L^2_M(\Gamma; L^2(\DD; \R))$ are given by
\begin{align*}
f_i = \langle f , \operatorname{He}_i \rangle_{L^2(\Gamma; \R)}
= \frac{1}{\sqrt{2 \pi}} \int_{- \infty}^{\infty} \lvert y - 1 \rvert \operatorname{He}_i(y) \exp(- y^2/2) \, dy , \quad i = 0, \dots , M.
\end{align*}

Since the diffusion coefficient is constant, the loss function for the \textit{S-GalerkinNet} and for the \textit{S-RitzNet} simplify to
\begin{align*}
\mathbf{L}^{\text{SG}}(x; \mathcal{N}_\theta^{\operatorname {SG}}) = \frac{1}{M + 1} \sum_{k = 0}^M \Big( \frac{d^2}{dx^2} {\scriptstyle \mathcal{U}}_{k ; \theta}^{\operatorname{SG}} (x) + f_k \Big)^2,
\quad
\mathbf{L}^{\operatorname{SR}}(x; \mathcal{N}_{\theta}^{\operatorname{SR}}) = \sum_{k = 0}^M \frac{1}{2} \Big( \frac{d}{dx} {\scriptstyle \mathcal{U}}_{k ; \theta}^{\operatorname{SR}}(x) \Big)^2  - f_k {\scriptstyle \mathcal{U}}_{k ; \theta}^{\operatorname{SR}} (x),
\end{align*}
where the spectral coefficients $(f_0, \dots , f_M) \in \R^{M+1}$ are, due to the spatial independence, precomputed in the offline phase. 
\begin{figure}
\begin{minipage}{.5\textwidth}
  \begin{center}
  \includegraphics[width=0.99\textwidth]{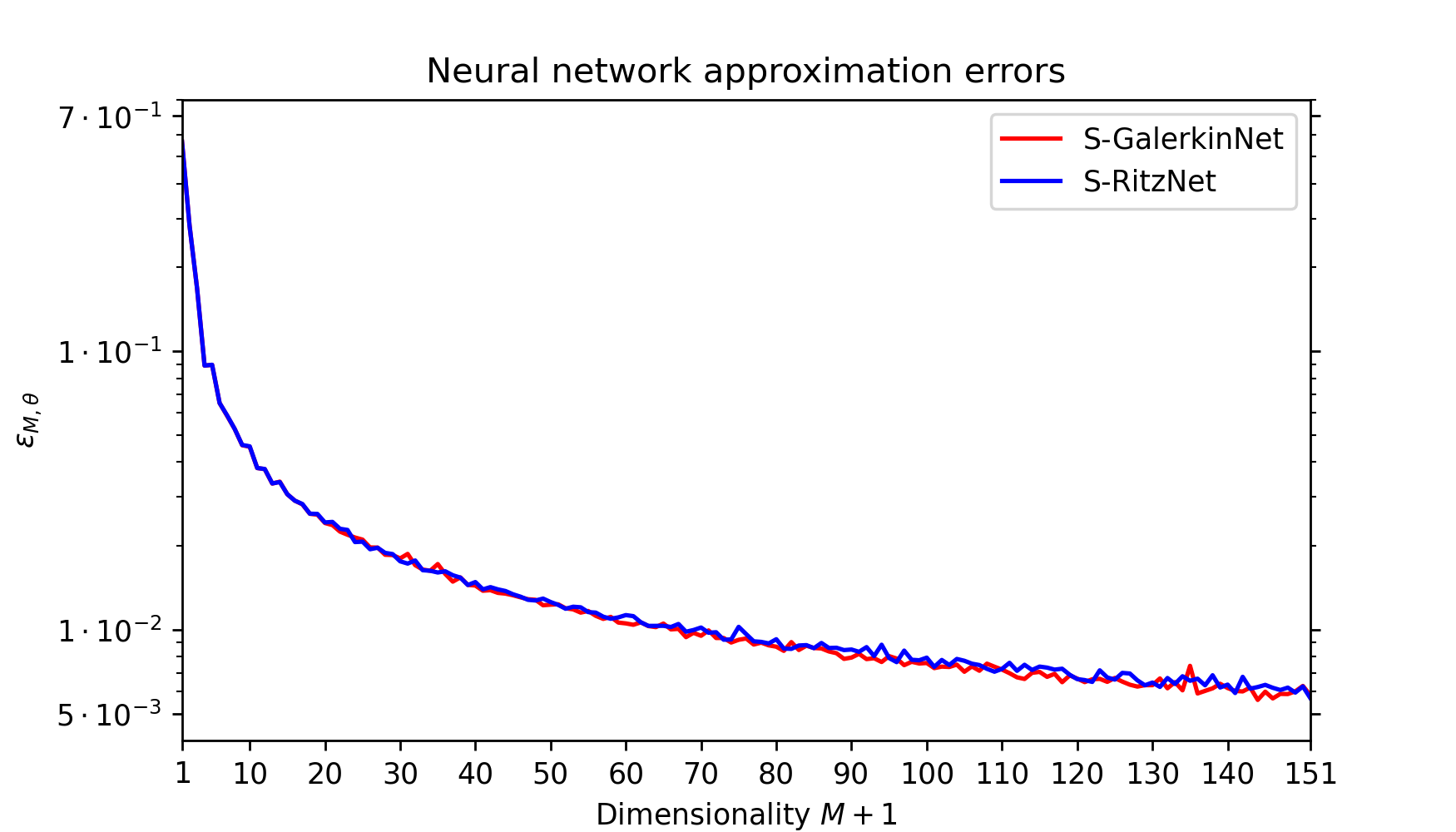} 
  \end{center}
\end{minipage}
\begin{minipage}{.5\textwidth}
  \begin{center}
  \includegraphics[width=0.99\textwidth]{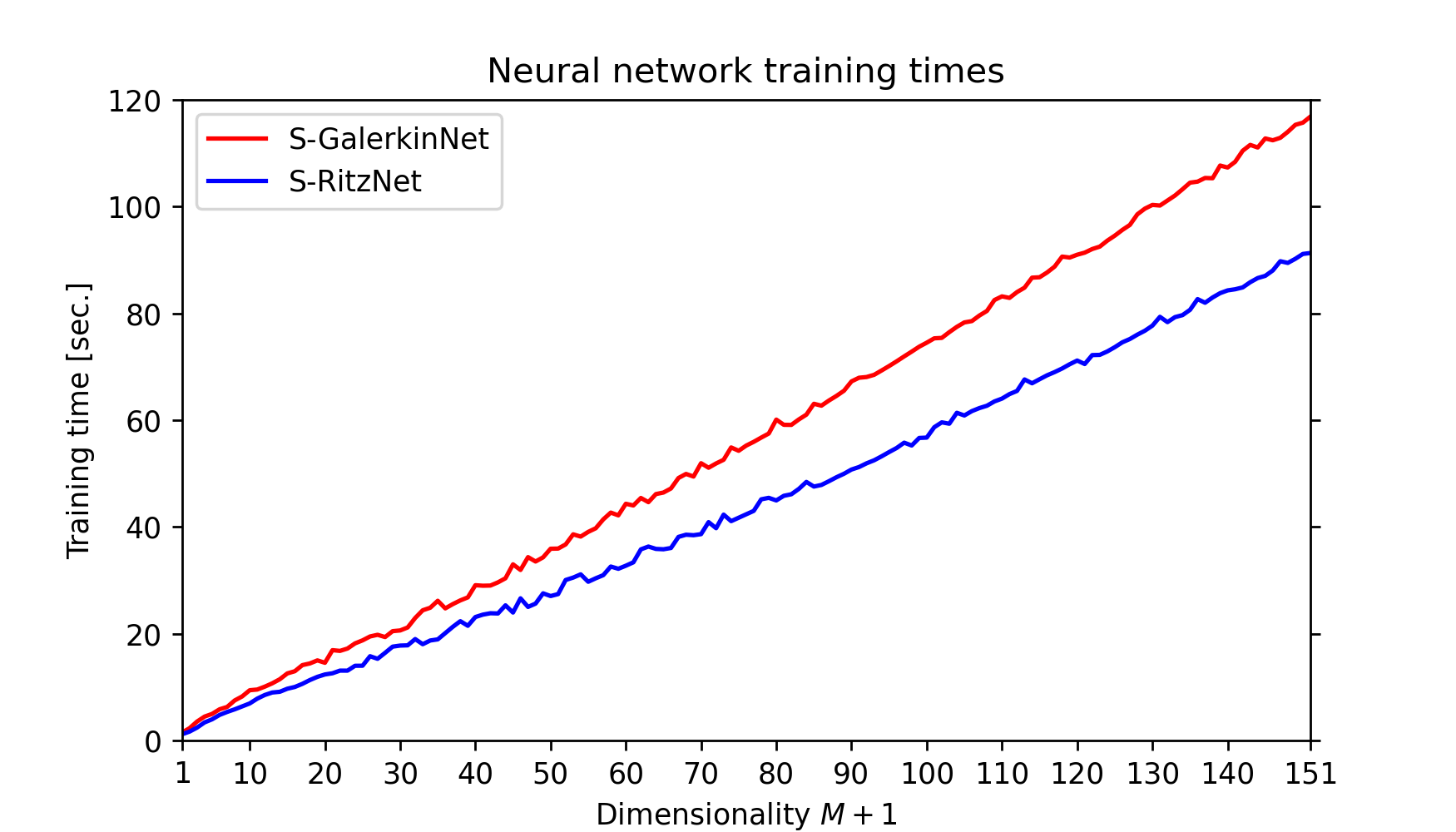}
  \end{center}
\end{minipage}
\caption{Performance and training times of \textit{S-GalerkinNet} and \textit{S-RitzNet}.}
\label{Figure:Experiment1_Data}
\end{figure}
We use the same architecture for the \textit{S-GalerkinNet} and the \textit{S-RitzNet}. Each neural network consists of $M+1$ disconnected branches, where each branch is made up by an input layer with 45 neurons and the \textnormal{swish} activation function (see \citep{Ramachandran2017}), three hidden layers, each of which has 45 \textnormal{sigmoid} activated neurons, and a linear output neuron. 
The \textnormal{ADAM} optimizer uses a decaying learning rate scheme, and the input spatial points are drawn as a continuous stream of quasi-random Sobol points.
The approximation errors as well as the training times of the neural networks are given in Figure \ref{Figure:Experiment1_Data}, for $P = 0,  \dots, 150$, resulting in systems of dimensions $M = 1, \dots , 151$. We achieve a relative $L^2(\Omega; H^1(\DD; \R))$-error of less than $0.6\%$ with the highest polynomial degree of $P = 150$ for both deep learning approaches. The resulting systems are decoupled and the training time grows linearly with rising polynomial dimension.

As a second example, we consider the elliptic random PDE in two spatial dimensions with a constant forcing term, and Karhunen-Loève-type stochastic diffusion coefficient. The diffusion coefficient is chosen in such a way that many KL terms are needed to accurately resolve the random field. This results in high dimensional systems of fully coupled two dimensional PDEs, that can be well approximated by the neural networks in reasonable training times.
Let $\DD = (0, 1)^2$, and let
\begin{align*}
a(\omega, x) = 3 - x_1 x_2 (1 - x_1) (1 - x_2) - \frac{1}{2} 
\sum_{k = 1}^\infty \left(\frac{1}{k} \right)^{8/5} e^{- \frac{1}{k} (x_1 - x_2)^2} (Y_k(\omega) + 1), \quad f(\omega, x) = 1,
\end{align*}
denote the stochastic diffusion coefficient and forcing term, where $x = (x_1, x_2)$, and $Y_k \sim \mathcal{U}([-1,1])$ independent for $k \in \N$. A truncation of the expansion after $N \in \N$ summands results in a diffusion coefficient that is driven by $Y = (Y_1, \dots , Y_N): \Omega \to \Gamma = [-1,1]^N$ satisfying Assumption~\ref{Assumption:RVsReq_gPC_MomentsAndDeterminateMeasure}. 
We use the tensor product Legendre Polynomials $(\operatorname{Le}_\nu)_{\nu \in \mathcal{I}_{N,1}}$ of maximal polynomial degree $P = 1$, evaluated in $Y$, as basis of $L^2(\Gamma; \R)$ with respect to the uniform distribution. The stochastic Galerkin approximation yields a fully coupled system of $M + 1 = N + 1$ two dimensional PDEs. 
The stochastic diffusion coefficient satisfies Assumption~\ref{Assumption:Requirements_ExUnique_WeakSol_REPDE} for deterministic bounds
\begin{align*}
0.65 = a_{\min} \leq a(\omega, x) \leq a_{\max} = 3 \text{ for every } x \in (0,1)^2, \omega \in \Omega.
\end{align*}
Hence there exists a unique solution $u \in L^2(\Omega; H_0^1(\DD; \R))$ of the classical stochastic weak formulation (Problem~\ref{Problem:ClassicalWeakStochasticFormulation}) of the considered elliptic random PDE
\begin{align*}
- \nabla \cdot \left(  \bigg(3 - x_1 x_2 (1 - x_1) (1 - x_2) - \frac{1}{2} 
\sum_{k = 1}^N \left(\frac{1}{k} \right)^{8/5} e^{- \frac{1}{k} (x_1 - x_2)^2} (Y_k(\omega) + 1) \bigg) \nabla u(\omega, x) \right)
	&=
1
	&& 
\hspace*{0.5cm} x \in (0,1)^2, \, \omega \in \Omega,	
	\\
u(\omega, x) 
	&=
0
	&&
\hspace*{0.5cm} x \in \partial \big([0,1]^2\big), \, \omega \in \Omega.
\end{align*}
Since $a_{\max} \in L^r(\Omega; \R)$ for every $r \in (0, \infty)$, by Theorem~\ref{Thm:RitzEnergyFunction}, and Remark~\ref{Rmk:RitzEnergyClassicalSetup}, there exists a unique solution $\bar{u}^{(M)} \in L^{2;(M)}(\Gamma; H_0^1(\DD; \R))$ of the classical weak form stochastic Galerkin approximation, Problem~\ref{Problem:ClassicalWeakSGA}, with a truncated KL-type expansion of the diffusion coefficient. We approximate the spectral coefficients of this solution by neural networks with the enforcer function $e(x) := x_1 x_2 (1 - x_1) (1 - x_2)$ to obtain
\begin{align*}
{\scriptstyle \mathcal{U}}_{\theta}^{\operatorname{SG}}(x) = x_1 x_2 (1 - x_1) (1 - x_2) \mathcal{N}_\theta^{\operatorname {SG}}(x),
\quad
{\scriptstyle \mathcal{U}}_{\theta}^{\operatorname{SR}}(x) = x_1 x_2 (1 - x_1) (1 - x_2) \mathcal{N}_\theta^{\operatorname {SR}}(x).
\end{align*}
The spectral coefficients $(a_0, \dots, a_M)$ of the PC expansion of the deterministic representation $\bar{a}^{(M)}: \Gamma \times \DD \to \R$, with $M = N$, sorted by the graded lexicographic ordering, are directly given by
\begin{align*}
a_0(x) 
	&=
3 - x_1 x_2 (1 - x_1) (1 - x_2) - \frac{1}{2} 
\sum_{k = 1}^{N} \left(\frac{1}{k} \right)^{8/5} e^{- \frac{1}{k} (x_1 - x_2)^2},
	\\
a_k(x)
	&=
\frac{1}{2} \left(\frac{1}{N - (k - 1)}\right)^{8/5} e^{- \frac{1}{N - (k-1)} (x_1 - x_2)^2}, \text{ for } k = 1, \dots, N.
\end{align*}
The corresponding loss functions are given in Problem~\ref{Problem:Training_SGN} and Problem~\ref{Problem:TrainingSRN} with unweighted operators.
We use the same architecture for the \textit{S-GalerkinNet} and the \textit{S-RitzNet}. Each neural network consists of $M+1$ disconnected branches, where each branch of the neural network consists of an input layer with 35 neurons and the \textnormal{swish} activation function, four hidden layers, each of which has 35 \textnormal{swish} activated neurons, and a linear output neuron. 
The \textnormal{ADAM} optimizer uses a decaying learning rate scheme, and the input spatial points are drawn as a continuous stream of quasi-random Sobol points. 
\begin{figure}[t!]
\begin{minipage}{.5\textwidth}
  \begin{center}
  \includegraphics[width=0.99\textwidth]{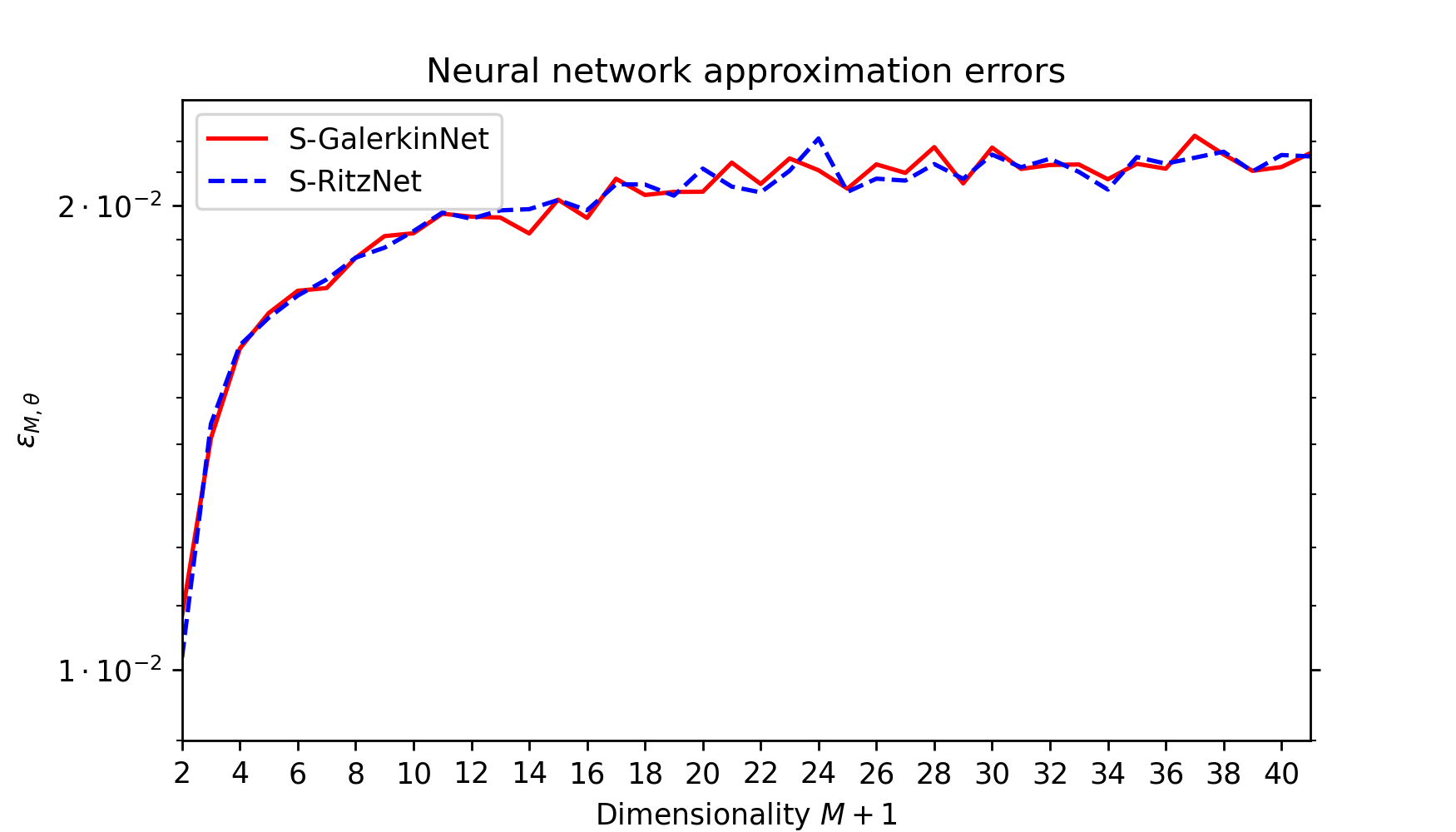}
  \end{center}
\end{minipage}
\begin{minipage}{.5\textwidth}
  \begin{center}
  \includegraphics[width=0.99\textwidth]{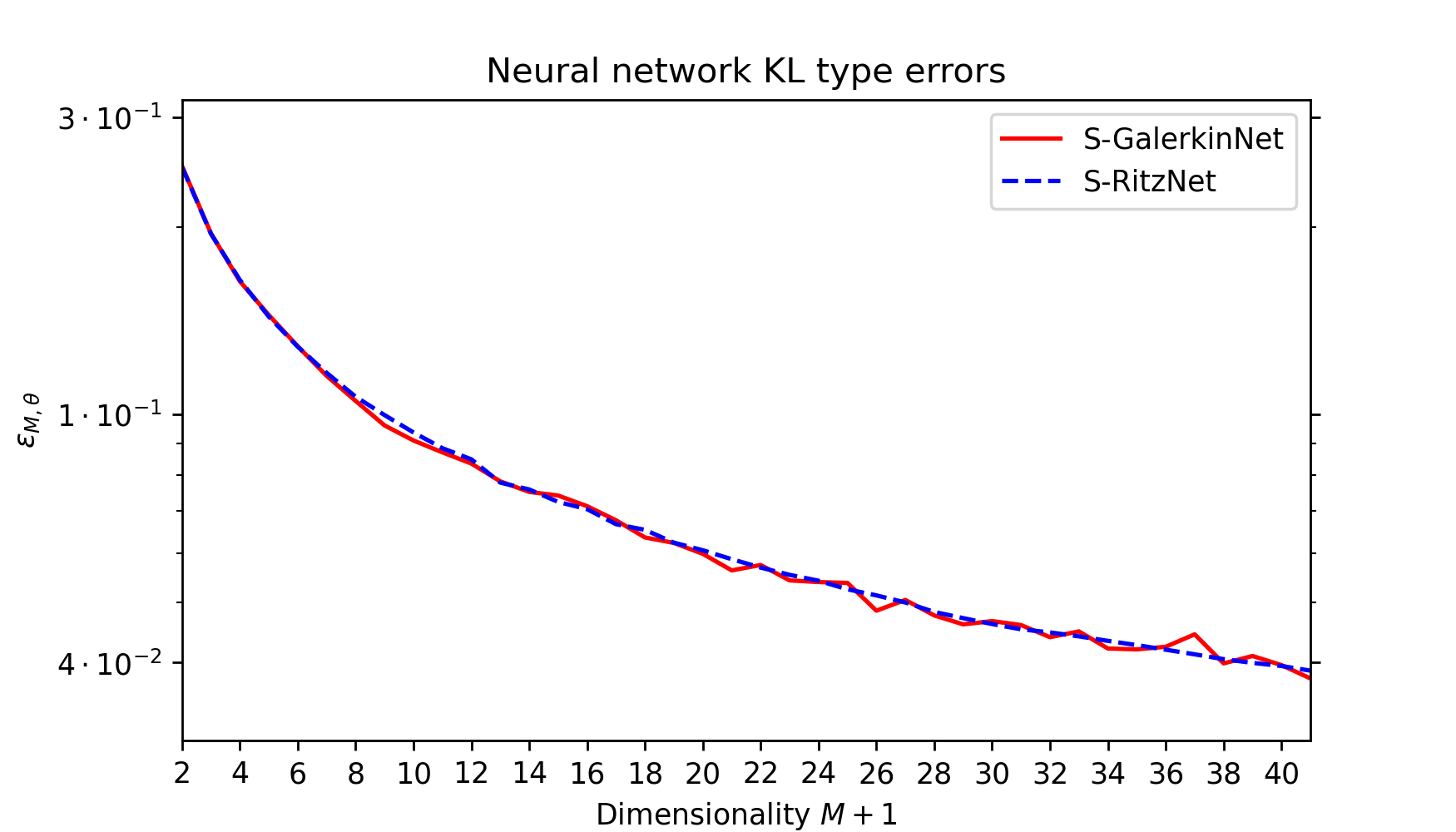}
  \end{center}
\end{minipage}
\begin{center}
\includegraphics[width=0.49\textwidth]{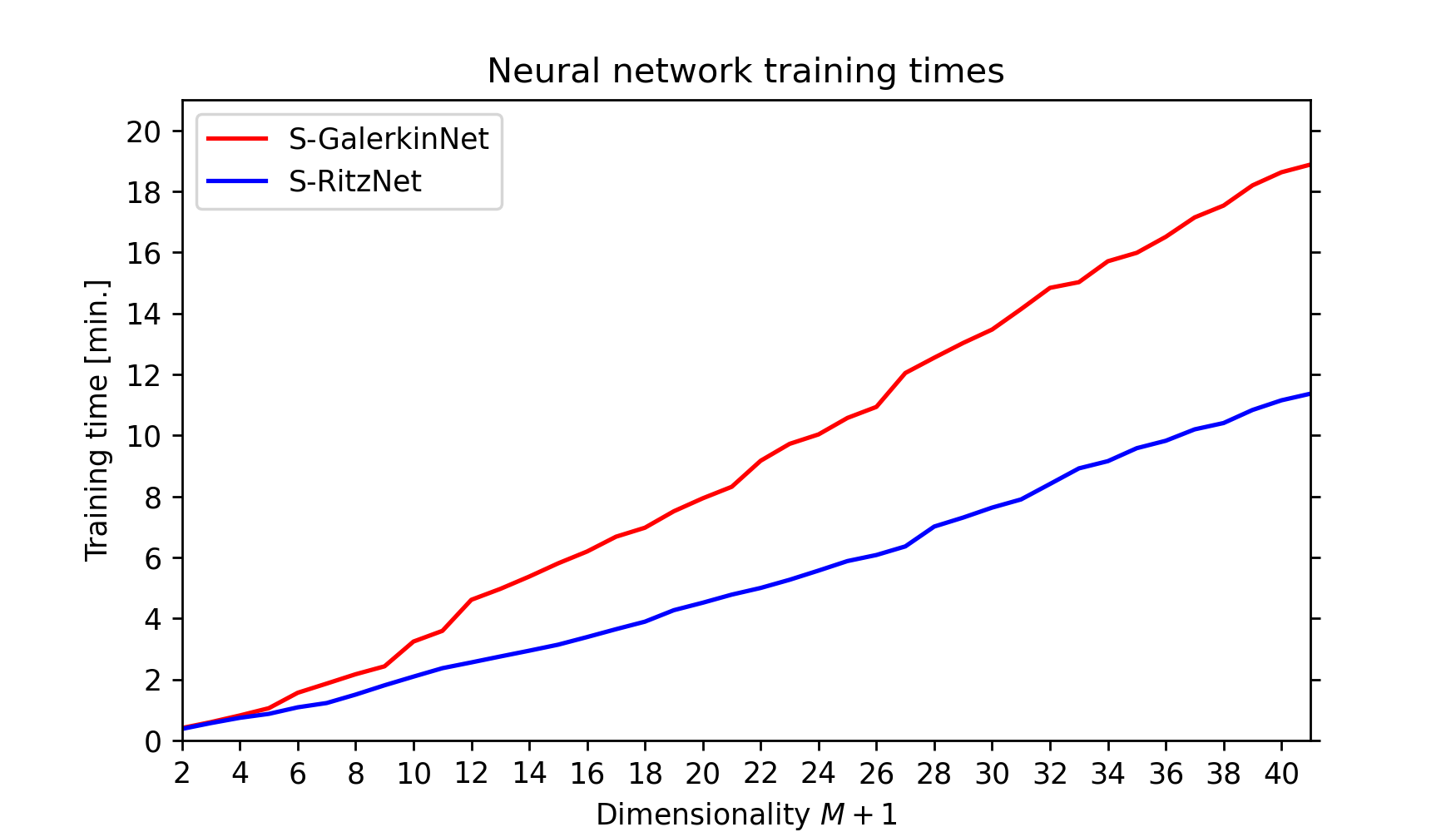}
\end{center}
\caption{Performance and training times of \textit{S-GalerkinNet} and \textit{S-RitzNet}.}
\label{Figure:Experiment2_Data}
\end{figure}
The performance with corresponding training times of the neural networks are given in Figure~\ref{Figure:Experiment2_Data} for $P = 1, \, N = 1, \dots , 40$. The resulting dimensionality of the coupled system of two dimensional PDEs is depicted on the $x$-axis. The errors are calculated against reference solutions, that are generated pathwise via a finite element method. The reference solutions in the shown neural network approximation errors are generated with the same truncation index of the KL-type expansion of the diffusion coefficients as used for the neural networks, to show the performance of the neural networks. The relative $L^2(\Omega; H^1(\DD; \R))$-error ranges from just above $1\%$ for $N = 1$ to just under $2.2\%$ for $N = 40$.
The reference solutions in the shown neural network KL-type errors are generated using $N = 200$ terms in the KL-type expansion of the diffusion coefficients, to show the convergence in the truncation index. By using $40$ KL-terms for the neural network, resulting in a system of $41$ coupled two dimensional PDEs, we achieve a relative $L^2(\Omega; H^1(\DD; \R))$-error of just above $3.8\%$.
The coupling strength $\operatorname{nnz}(A(x)), \operatorname{nnz}(B(x))$, as well as $\operatorname{nnz}(\mathbf{A}(x))$ range from around $13\%$ for $N = 1$ to around $5\%$ for $N = 40$. The relative number of non-negative, non-diagonal entries of the coupling operator decreases with increasing stochastic dimension, since the sparsity of the Galerkin tensor increases exponentially in $N$.
The training time grows linearly with the increasing number of random variables.

As a third example, we consider the elliptic random PDE with a log-normal diffusion coefficient given by a Gaussian process with squared exponential covariance kernel, and constant deterministic forcing term. This diffusion coefficient requires both, a high stochastic dimension as well as a high polynomial dimension. By increasing both parameters, the dimensionality of the resulting fully coupled systems grows extremely fast.
Let $\DD = (0,1)$, and let
\begin{align*}
a(\omega, x) = \exp(\mathcal{G}(\omega, x)), \quad f(\omega, x) = 1,
\end{align*}
denote the stochastic diffusion coefficient and forcing term. Here, $\mathcal{G}$ is a Gaussian random field with zero mean and squared exponential covariance kernel, given by
\begin{align*}
k(x,x') = \exp \Big( - \frac{1}{2} (x - x')^2 \Big).
\end{align*}
We approximate the Gaussian random field by the truncated Karhunen-Loève expansion.
The normalized eigensystem $(\lambda_k, \phi_k)_{k \in \N}$ of the corresponding kernel integral operator is analytically given by 
\begin{align*}
\lambda_k = \Big( \frac{2}{3 + \sqrt{5}} \Big)^{k + 1/2}, \quad \phi_k(x) = 
5^{1/8} \frac{1}{\sqrt{2^k k!}}\exp\Bigg( -\frac{\sqrt{5} - 1}{4} x^2 \Bigg) H_k \Bigg( \bigg( \frac{\sqrt{5}}{2} \bigg)^{1/2} x \Bigg),
\quad k \in \N,
\end{align*}
where $H_k$ denotes the $k$-th physicist's Hermite Polynomial (see e.g. \citep[Ch. 4.3.1]{Rasmussen2006}, \citep[Ch. 4]{Zhu1998}). The Gaussian process is then given by
\begin{align*}
\mathcal{G}(\omega, x) = \sum_{k = 0}^{\infty} \sqrt{\lambda_k} \phi_k(x) Y_k(\omega),
\end{align*}
where $Y_k$ are independent standard normally distributed random variables for $k \in \N$. A truncation of the KL expansion after $N \in \N$ summands implies that our diffusion coefficient is driven by $Y = (Y_0, \dots , Y_{N-1}) : \Omega \to \Gamma = \R^N$, satisfying Assumption \ref{Assumption:RVsReq_gPC_MomentsAndDeterminateMeasure}, and given by
\begin{align*}
a(\omega, x) = \exp \Bigg( \sum_{k = 0}^{N-1} \sqrt{\lambda_k} \phi_k(x) Y_k(\omega) \Bigg).
\end{align*}
Since the log-normal distribution is not determinate, we use tensor product probabilist's Hermite polynomials $(\operatorname{He}_\nu)_{\nu \in \mathcal{I}_{N,P}}$, evaluated at $Y$, as polynomial chaos basis of $L^2(\Gamma, \R)$ with respect to the $N$-variate standard normal distribution, resulting in a fully coupled system of dimension $M + 1 = \frac{(N + P)!}{N!  P!}$. The solution $u: \Omega \times [0,1] \to \R$ of
\begin{align*}
- \frac{d}{dx} \Bigg( \exp\bigg( \sum_{k = 0}^{N - 1} \sqrt{\lambda_k} \phi_k(x) Y_k(\omega) \bigg) \frac{d}{dx} u(\omega, x) \Bigg) 
	&=
1
	&& 
\hspace*{-2.5cm} x \in (0,1), \, \omega \in \Omega,	
	\\
u(\omega, 0) = u(\omega, 1) 
	&=
0
	&&
\hspace*{-2.5cm} \omega \in \Omega,
\end{align*}
is not available in closed form, but the corresponding stochastic Galerkin approximations, with a truncated KL expansion, admit a unique solution, as shown in \citep{Charrier2013}. The stochastic diffusion coefficient, with a truncated KL expansion, satisfies Assumption \ref{Assumption:Requirements_ExUnique_WeakSol_REPDE}, with 
\begin{align*}
a_{\min}(\omega) = \exp\big(-\sqrt{\lambda_0} \phi_0(0) \lVert Y(\omega) \rVert_1\big), \quad a_{\max}(\omega) = \exp\big(\sqrt{\lambda_0} \phi_0(0) \lVert Y(\omega) \rVert_1\big),
\end{align*}
where $a_{\min}^{-1} \in L^p(\Omega; \R)$ for every $p \in (0,\infty)$, since $\exp(\lvert Y_i \rvert) \in L^p(\Omega; \R)$, with respect to the univariate standard normal distribution, for all $p \in (0, \infty)$. 
This also implies that $f \in L^2_{a_{\min}^{-2}}(\Omega; L^2(\DD; \R))$, and the forcing term also satisfies Assumption \ref{Assumption:Requirements_ExUnique_WeakSol_REPDE}. 
Furthermore, we get  $\frac{a_{\max}(\omega) }{a_{\min}(\omega)} = \exp\big(2\sqrt{\lambda_0} \phi_0(0) \lVert Y(\omega) \rVert_1\big) \in L^r(\Omega; \R)$ for all $r \in (1, \infty)$. We choose the enforcer function $e(x):= x(1-x)$, and hence we have
\begin{align*}
{\scriptstyle \mathcal{U}}_{\theta}^{\operatorname{SG}}(x) = x(1-x) \mathcal{N}_\theta^{\operatorname {SG}}(x),
\quad
{\scriptstyle \mathcal{U}}_{\theta}^{\operatorname{SR}}(x) = x(1-x) \mathcal{N}_\theta^{\operatorname {SR}}(x).
\end{align*}
For determining the PC expansion, note that we can fully split the integrals
\begin{align*}
a_\nu(x) 
	= 
\langle a(\cdot,x) , \operatorname{He}_\nu \rangle_{L^2(\Gamma; \R)}
	=
\frac{1}{(2 \pi)^{N/2}} 
	\prod_{i = 0}^{N-1} 
\int_\R \exp\big( \sqrt{\lambda_i} \phi_i(x) y_i \big) \operatorname{He}_{\nu_i} (y_i) \exp\big(-y_i^2/2\big) \, dy_i,
\end{align*}
and hence the spectral coefficients of the deterministic representation $\bar{a}^{(M)}: \Gamma \times \DD \to \R$ are approximated online via a univariate Gauss-Hermite quadrature rule. \\
\begin{figure}[t!]
\begin{minipage}{.5\textwidth}
  \begin{center}
  \includegraphics[width=0.99\textwidth]{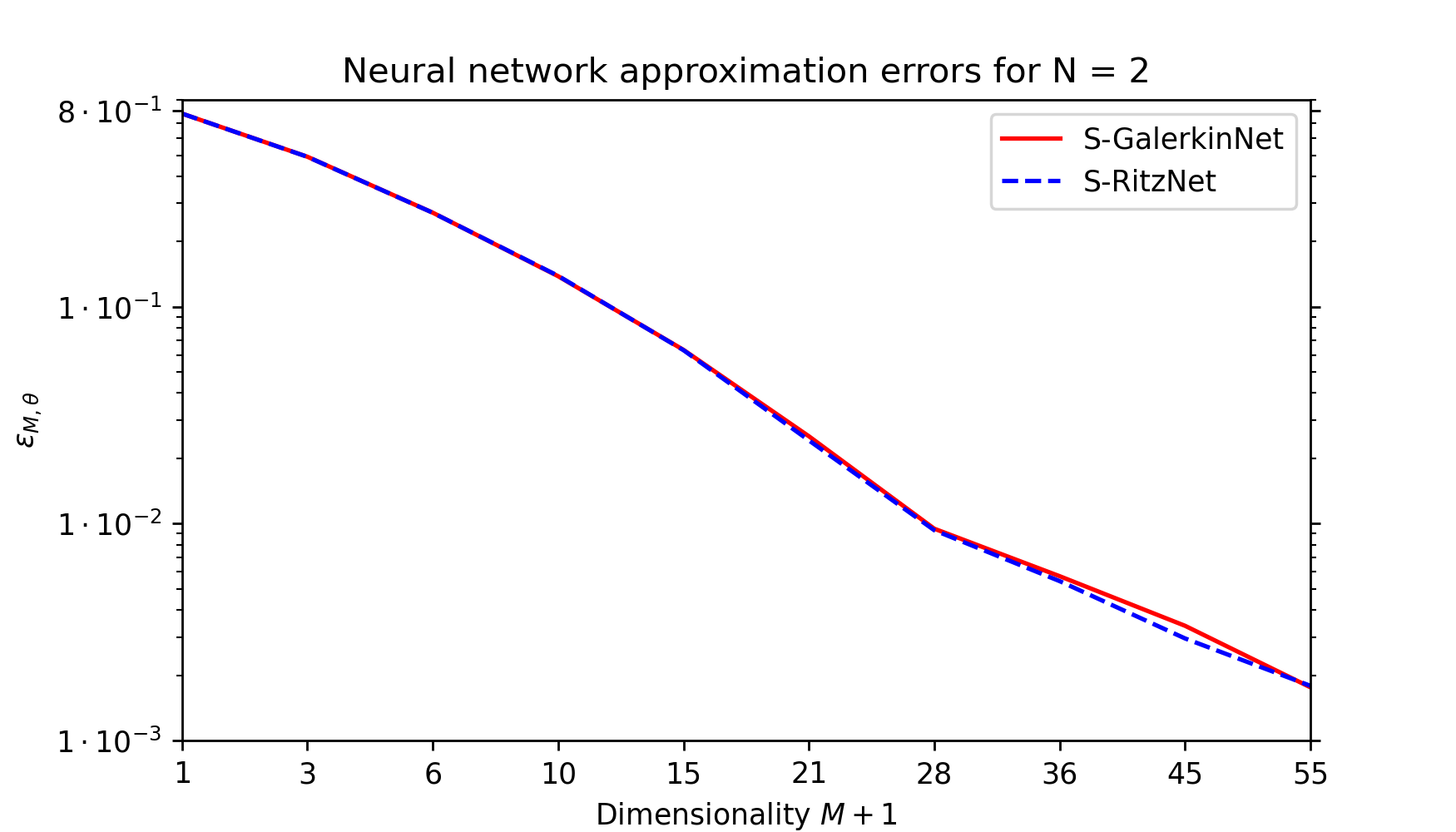} \\
  \includegraphics[width=0.99\textwidth]{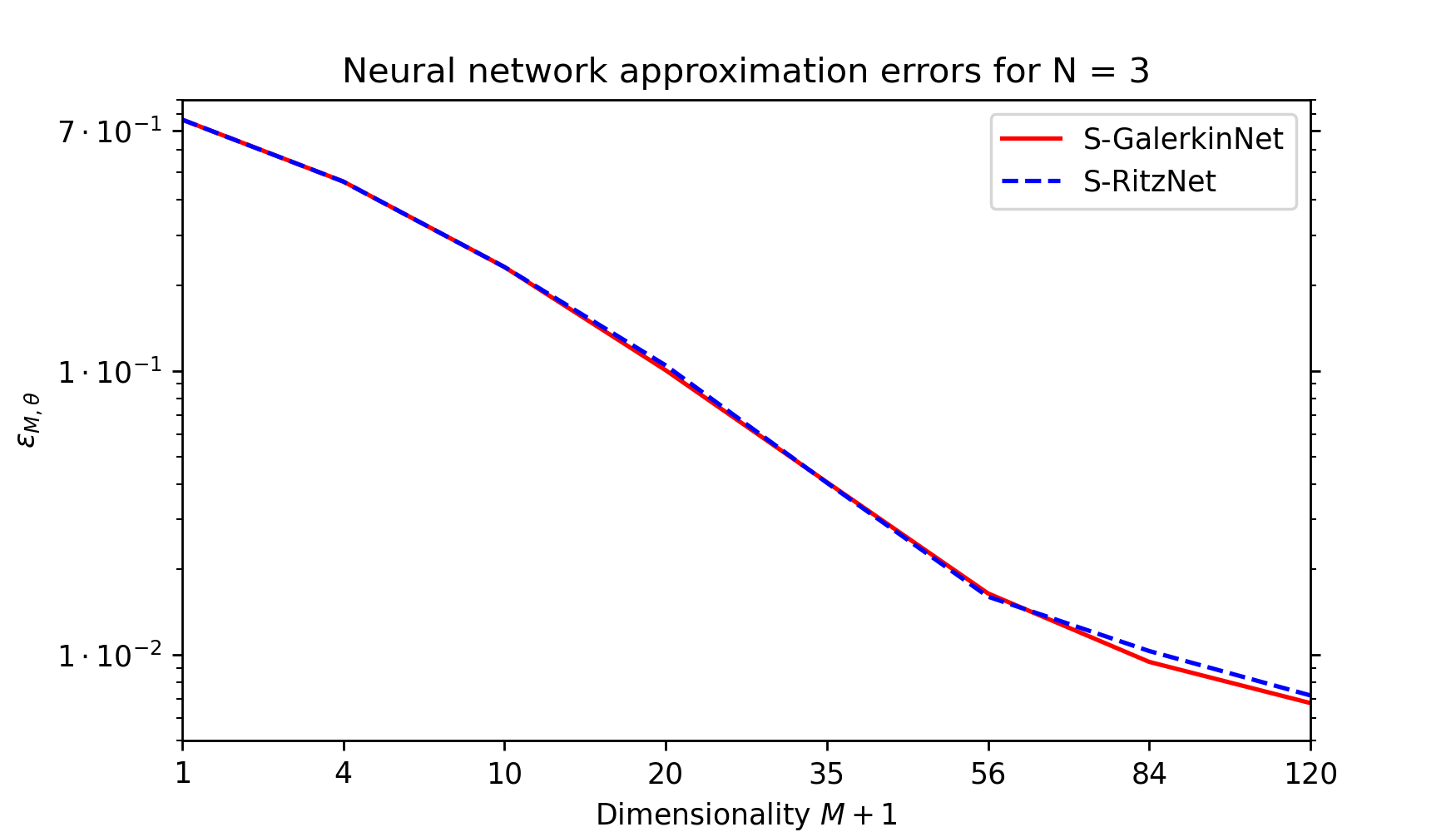}
  \end{center}
\end{minipage}
\begin{minipage}{.5\textwidth}
  \begin{center}
  \includegraphics[width=0.99\textwidth]{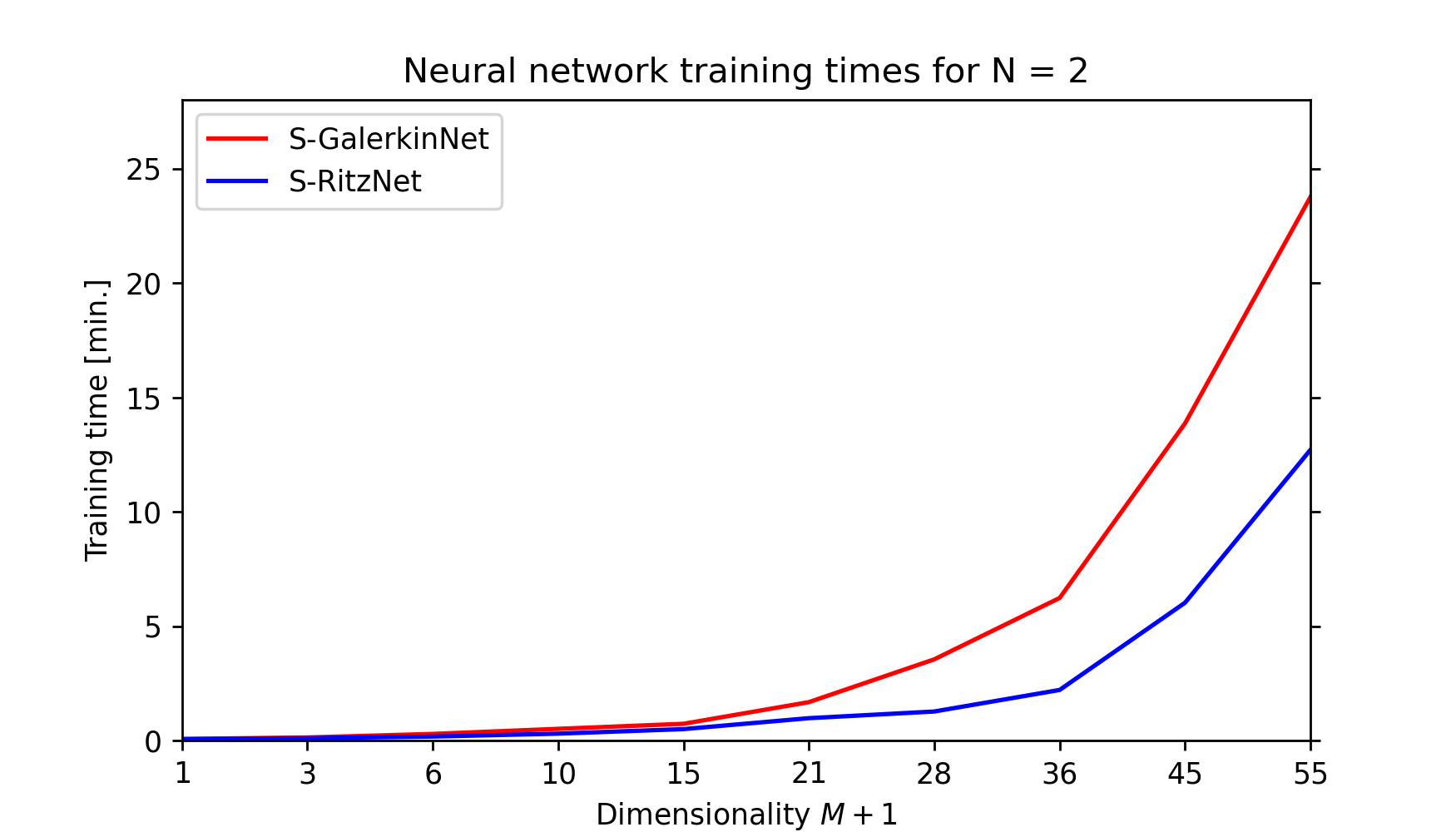} \\
  \includegraphics[width=0.99\textwidth]{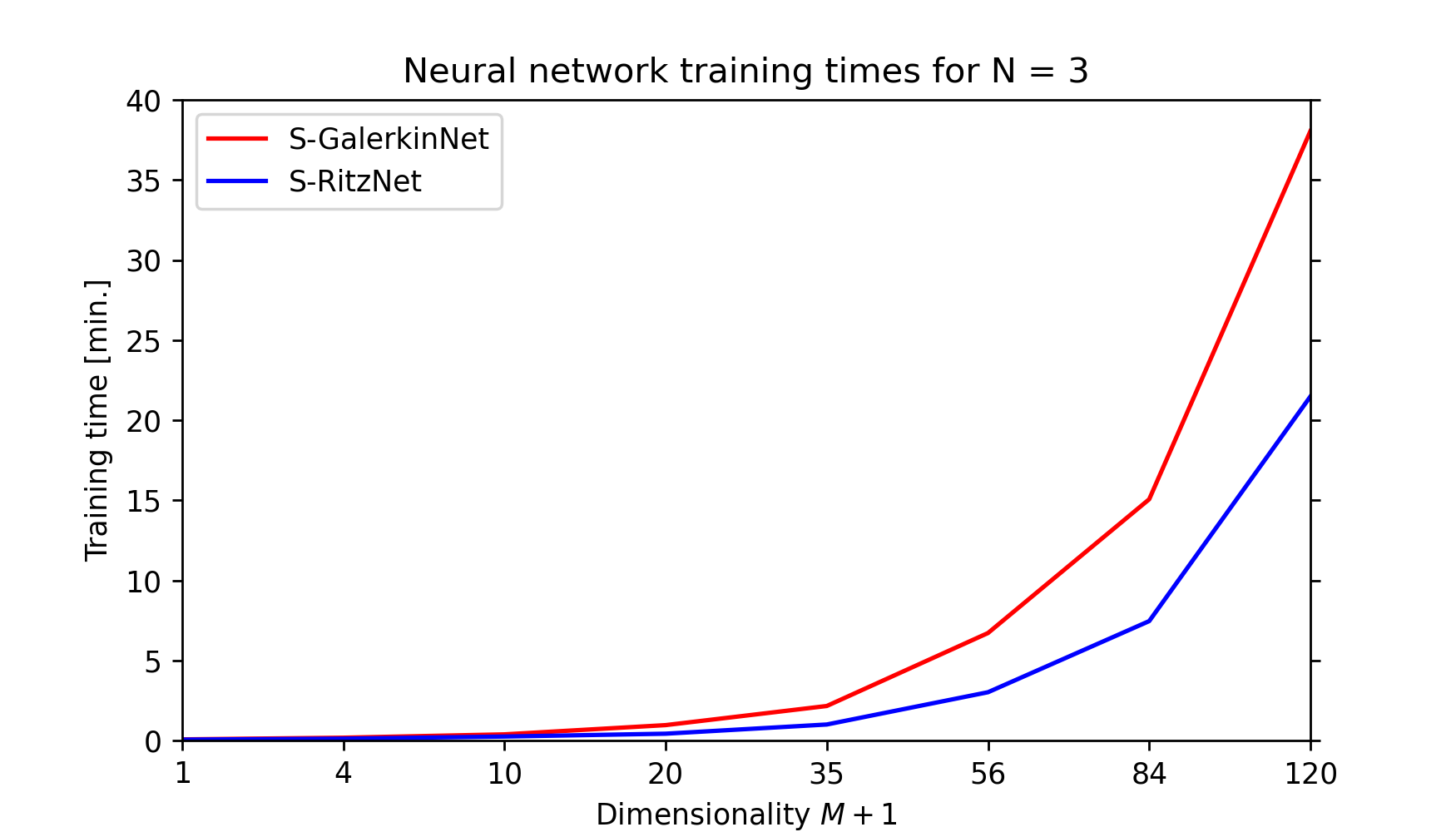}
  \end{center}
\end{minipage}
\caption{Performance and training times of \textit{S-GalerkinNet} and \textit{S-RitzNet}.}
\label{Figure:Experiment3_Data}
\end{figure}
\noindent
The coefficients are sorted according to the graded lexicographic ordering, Definition \ref{Def:graded_lexic_ordering}, to obtain $(a_0, \dots , a_M)$.
The resulting systems are fully coupled, and the corresponding loss functions are given in Problem \ref{Problem:Training_SGN}, \ref{Problem:TrainingSRN}.
We use the same architecture for the \textit{S-GalerkinNet} and the \textit{S-RitzNet}. Each neural network consists of $M+1$ disconnected branches, where each branch of the neural network consists of an input layer with 45 neurons and the \textnormal{swish} activation function, four hidden layers, each of which has 45 \textnormal{swish} activated neurons, and a linear output neuron. 
The \textnormal{ADAM} optimizer uses a decaying learning rate scheme, which is adjusted for different polynomial dimensions, and the input spatial points are drawn as a continuous stream of quasi-random Sobol points. 
The neural networks are scaled to solve an equivalent equation. 
The approximation errors, as well as the training times of the neural networks, are given in Figure \ref{Figure:Experiment3_Data} for $ N = 2, \, P = 0, \dots, 9$, and $N = 3, \, P = 0,  \dots, 7$.
The resulting dimensionality of the coupled system of PDEs is depicted on the $x$-axis for rising polynomial degrees. The error is calculated against reference solutions, that are generated pathwise via a finite element method. The reference solutions use the same truncation index $N = 2, 3$ in their respective KL expansions as the neural network approximations. 
For $N = 2$, we achieve a relative $L^2(\Omega; H^1(\DD; \R))$-error of less than $0.18\%$ for the highest polynomial degree $P = 9$ in a coupled system of 55 equations, and for $N = 3$, we achieve a relative $L^2(\Omega; H^1(\DD; \R))$-error of less than $0.75\%$ for the highest polynomial degree $P = 7$ in a coupled system of 120 equations.
The resulting systems are strongly coupled, where the coupling strength $\operatorname{nnz}(A(x)), \operatorname{nnz}(B(x))$, as well as $\operatorname{nnz}(\mathbf{A}(x))$ range from around $75\%$ for $N = 3, P = 1$ to around $72\%$ for $N = 3, P = 7$.
The training time grows nearly linearly with increasing dimensionality.


\nocite{*}
\bibliography{references_dlsgm}

\end{document}